\newcommand\cald[0]{\mathcal{D}}
\newcommand\cale[0]{\mathcal{E}}
\newcommand\caln[0]{\mathcal{N}}
\newcommand\cals[0]{\mathcal{S}}
\newcommand\Exp[0]{\mathcal{E}xp}
\newcommand\mr[0]{\mathbb{R}}
\newcommand\mc[0]{\mathbb{C}}
\newcommand\mz[0]{\mathbb{Z}}
\newcommand\mn[0]{\mathbb{N}}
\newcommand\dnc[0]{\operatorname{DNC}}
\DeclarePairedDelimiterX\braket[2]{\langle}{\rangle}{#1\,\delimitersize\vert\,\mathopen{}#2}
\DeclarePairedDelimiterX\dual[2]{\langle}{\rangle}{#1\,,\mathopen{}#2}
\theoremstyle{plain}
\newtheorem{theorem}{Theorem}[section]
\newtheorem{corollary}{Corollary}[theorem]
\newtheorem{lemma}[theorem]{Lemma}
\theoremstyle{definition}
\newtheorem{definition}[theorem]{Definition}
\newtheorem{prop}[theorem]{Proposition}
\newtheorem{example}[theorem]{Example}
\newtheorem{remark}[corollary]{Remark}
\begin{document}

\title{Extensions of homogeneous distributions on deformations to the normal cone}
\author{Moudrik CHAMOUX}
\date{December 6th, 2024}
\maketitle

\thispagestyle{empty}
\newpage

\newpage
\thispagestyle{empty}

\part*{Abstract}
On a deformation to the normal cone $\dnc(M,V)$ we show that given a distribution $u\in\cald'(\dnc(M,V)\setminus V\times\mr)$ if $u$ is homogeneous of order $a$ for the zoom action, then it admits an $a$-homogeneous extension $\widetilde{u}\in\cald'(\dnc(M,V))$. We describe all such extensions and discuss briefly about how it translates to the work of Van Erp and Yuncken in \cite{vanerp2017groupoid}. The technique used come from the results on the extension of weakly homogeneous distributions provided by Yves Meyer in \cite{meyer1997wavelets}.

\newpage
\thispagestyle{empty}
\tableofcontents
\endtitlepage

\section*{Introduction}\label{sec:intro maths}
\addcontentsline{toc}{section}{\nameref{sec:intro maths}}

In their two papers \cite{van_Erp_2017}, \cite{vanerp2017groupoid} Erik Van Erp and Robert Yuncken, inspired by the work of Claire Debord and Georges Skandalis in \cite{debord2013adiabatic}, managed to give a purely geometric and algebraic characterization of pseudodifferential operators on a filtered manifold. The starting idea can be traced back to the work of Connes presented in \cite{connes1994noncommutative} where he states an index theorem using the tangent groupoid 
\begin{equation*}
    \mathbb T M=M\times M\times\mr^*\sqcup TM\times\{0\}\rightrightarrows M\times\mr.
\end{equation*}
Very roughly the idea is as follows : given a manifold $M$ and an elliptic differential operator $D$ the index theorem relates the analytical index of $D$ with the topological index of $D$ (which only depends on $\sigma(D)$ the principal symbol of $D$ and topological invariants of $M$). The analytical index of $D$ can be obtained through the study of the $K$-theory of the $C^*$-algebra associated to the pair groupoid $M\times M$ whereas the topological index can be retrieved through the study of the $K$-theory of the $C^*$-algebra associated to the tangent bundle $TM$ where the cosymbol of $D$ "lives". The idea of Connes was to construct from $D$ an object $\mathbb D$ living on $\mathbb T M$ such that :
\begin{enumerate}
    \item $\mathbb D_{1}=D$,
    \item $\mathbb D_{0}$ is the cosymbol of $D$,
    \item $\mathbb D=(\mathbb D_t)_t$ is a continuous family of Fredholm operators.
\end{enumerate}
Then one can use homotopy results to obtain the index theorem by the study of the K-theory associated to the $C^*$-algebra associated to $\mathbb TM$.

In \cite{vanerp2017groupoid} the authors constructed an explicit realization of a tangent groupoid $\mathbb T_H M$ with respect to a filtration $H$ on a manifold $M$ in \cite{vanerp2017groupoid}. Then in \cite{van_Erp_2017} they develop the pseudodifferential calculus suggested by \cite{debord2013adiabatic}. Moreover Erik Van Erp and Robert Yuncken manage to characterize explicitly, differential operators and pseudodifferential operators on $M$ compatible with the filtration $H$. So in particular they retrieve (pseudo)-differential calculus in a coordinate free way.

\begin{theorem}[Characterization of differential operators on $M$ \cite{van_Erp_2017}]\ \\
A semiregular kernel $P \in \mathcal{E}'_r(M \times M)$ is the Schwartz kernel of a differential operator of order $a$ if and only if $P = \mathbb{P}|_{t=1}$ for some $\mathbb{P} \in \mathcal{E}'_r(\mathbb{T}M)$ such that 
\begin{equation*}
    \forall \lambda \in \mathbb{R}_+^\times,\ \alpha_{\lambda*}\mathbb{P} = \lambda^a \mathbb{P}.
\end{equation*}
\end{theorem}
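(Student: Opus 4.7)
The plan is to deduce this characterization from the extension theorem announced in the abstract, applied to the particular case $\mathbb{T}M = \dnc(M \times M, M_{\Delta})$ where $M_{\Delta}$ denotes the diagonal; under this identification the zoom action on the DNC coincides with the scaling $\alpha_\lambda$ of the tangent groupoid. The proof then splits into two implications, with the forward direction using the extension theorem to construct $\mathbb{P}$ from $P$, and the backward direction using a structure result on homogeneous distributions supported at a point.

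For the $(\Rightarrow)$ direction, suppose $P$ is the kernel of a differential operator $D$ of order $a$. On the open dense subset $\mathbb{T}M \setminus TM \simeq M \times M \times \mr^\times$, I would define $\mathbb{P}_{t \neq 0}$ to be the unique $a$-homogeneous distribution for the zoom action whose restriction at $t = 1$ equals $P$; concretely this is obtained by transporting $P$ along the zoom orbits with the appropriate $\lambda^a$ weighting. Since the zoom action on $\mathbb{T}M \setminus TM$ acts freely and transversally to the slice $\{t = 1\}$, this distribution is well-defined and $a$-homogeneous. The main theorem of the paper then provides an $a$-homogeneous extension $\mathbb{P} \in \cald'(\mathbb{T}M)$. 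The remaining step is to verify that this extension lies in $\cale'_r$, which amounts to tracking the fiberwise compact support through the Meyer-type construction used in the proof of the extension theorem.

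For the $(\Leftarrow)$ direction, suppose $\mathbb{P} \in \cale'_r(\mathbb{T}M)$ is $a$-homogeneous. I would focus on the restriction $\mathbb{P}_0 := \mathbb{P}|_{t = 0} \in \cale'_r(TM)$, which inherits $a$-homogeneity under the fiberwise scaling $(x, \xi) \mapsto (x, \lambda \xi)$. Because $r$-semiregularity forces $\mathbb{P}_0$ to have compact support along each fiber of $TM \to M$, and any nonzero homogeneous compactly supported distribution on $\mr^n$ must be supported at the origin, $\mathbb{P}_0$ is supported on the zero section $M \subset TM$. The classical characterization of homogeneous distributions supported at a point then yields $\mathbb{P}_0 = \sum_{|\alpha| \leq a} c_\alpha(x) \partial_\xi^\alpha \delta_0(\xi)$ with smooth coefficients $c_\alpha \in C^\infty(M)$. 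Finally the $a$-homogeneity of $\mathbb{P}$ on the full groupoid forces $P = \mathbb{P}|_{t = 1}$ to be the Schwartz kernel of the differential operator $\sum c_\alpha(x) \partial^\alpha$.

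The main obstacle is the rigidity required in the $(\Leftarrow)$ direction: passing from the local structure of $\mathbb{P}_0$ at $t = 0$ to the global structure of $P$ at $t = 1$ requires interpolating along zoom orbits and justifying that $P$ is \emph{exactly} the differential operator with the determined symbol, rather than differing by a smoothing correction. Since the zoom orbits on $\mathbb{T}M \setminus TM$ all meet both $t = 0$ and $t = 1$, the distribution on the open piece is morally determined by either endpoint, but turning this into a proof requires careful bookkeeping across the smooth structure at $t = 0$. A secondary difficulty is ensuring the $\cale'_r$ property of the extension produced in the $(\Rightarrow)$ direction, for which one must inspect the Meyer-type construction rather than merely quote the extension theorem as a black box.
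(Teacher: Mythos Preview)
This theorem is quoted from \cite{van_Erp_2017} and is not proved in the present paper; the only contact the paper makes with it is the remark after \Cref{characterization of homogeneous pi-transversal distribution supported on VxR}, where the explicitly described set of $\pi$-transversal $a$-homogeneous distributions supported on $V\times\mr$ is identified with the differential operators of Van Erp--Yuncken. So there is no ``paper's own proof'' to compare against, only that structural remark.

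That said, your proposed argument for $(\Rightarrow)$ does not work, and the issue is a misreading of what the extension theorem actually does. The main result of the paper extends $a$-homogeneous distributions from $\dnc(M,V)\setminus(V\times\mr)$ to $\dnc(M,V)$: it fills in across the submanifold $V\times\mr$, which in the case $\mathbb TM=\dnc(M\times M,M_\Delta)$ is the diagonal at all times $t$. It does \emph{not} extend from the open piece $\{t\neq 0\}=M\times M\times\mr^*$ across the $t=0$ fiber $TM$. Your construction produces $\mathbb P_{t\neq 0}$ on $M\times M\times\mr^*$, but this is the wrong domain for the theorem. Worse, since $P$ is the kernel of a differential operator it is supported on the diagonal $M_\Delta$, so your $\mathbb P_{t\neq 0}$ is supported on $M_\Delta\times\mr^*\subset V\times\mr$; its restriction to $\dnc(M\times M,M_\Delta)\setminus(M_\Delta\times\mr)$ is identically zero, and the extension theorem applied to it returns zero. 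The theorem of this paper is simply not the right tool here: for $(\Rightarrow)$ one writes $D=\sum c_\alpha\partial^\alpha$ and checks directly, in exponential coordinates, that $\mathbb P=\sum c_\alpha(x)\,t^{a-|\alpha|}\partial_X^\alpha\delta_{X=0}$ is smooth across $t=0$ and $a$-homogeneous.

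For $(\Leftarrow)$ your outline is closer in spirit to what the paper's \Cref{characterization of homogeneous pi-transversal distribution supported on VxR} provides, but you have not closed the gap you yourself flag: knowing that $\mathbb P_0$ is a sum of $\partial_\xi^\alpha\delta_0$'s does not by itself force $\mathbb P|_{t=1}$ to be the corresponding differential operator. One needs to argue that $\mathbb P$ is supported on $M_\Delta\times\mr$ for \emph{all} $t$ (which follows from $r$-properness plus homogeneity exactly as you say for $t=0$, applied uniformly), and then invoke the explicit description of $H^a_{\alpha,V\times\mr}\cap C^{-\infty}_\pi$ in that corollary to read off the structure at every $t$ simultaneously, not just at $t=0$.
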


\begin{theorem}[Characterization of pseudodifferential operators on $M$ \cite{van_Erp_2017}]\ \\
A semiregular kernel $P \in \mathcal{E}'_r(M \times M)$ is the Schwartz kernel of a properly supported classical pseudodifferential operator of order $a$ if and only if $P = \mathbb{P}|_{t=1}$ for some $\mathbb{P} \in \mathcal{E}'_r(\mathbb{T}M)$ such that 
\begin{equation*}
    \forall,\ \lambda \in \mathbb{R}_+^\times,\ \alpha_{\lambda*}\mathbb{P} - \lambda^a \mathbb{P} \text{ is a smooth properly supported density of the r-fibers.}
\end{equation*}
\end{theorem}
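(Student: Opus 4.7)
The plan is to argue the two implications separately, using in each case the DNC extension theorem for homogeneous distributions that is the main result of this paper, applied via the identification $\mathbb{T}M \simeq \dnc(M\times M, M)$. The guiding dictionary is that a distribution on $\mathbb{T}M$ which is $d$-homogeneous for the zoom action restricts at $t=0$ to an $r$-fibred convolution kernel on $TM$ whose fibrewise Fourier transform is a principal symbol of degree $d$, and at $t=1$ to the Schwartz kernel of the operator of interest.

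For the forward direction, starting from a classical properly supported pseudodifferential operator $P$ of order $a$ with polyhomogeneous expansion $p \sim \sum_{j\geq 0} p_{a-j}$, I would for each $j$ fibrewise invert the Fourier transform of $p_{a-j}$ to produce an $(a-j)$-homogeneous distribution on $TM$, view it via the zoom action as an $(a-j)$-homogeneous distribution on the open part of $\mathbb{T}M$ away from $M\times\{0\}$, and apply the extension theorem to obtain $\mathbb{P}_{a-j}\in\mathcal{E}'_r(\mathbb{T}M)$ exactly $(a-j)$-homogeneous. A Borel-type asymptotic summation with cut-offs in $t$ (to preserve proper support and $r$-semiregularity) yields $\mathbb{P} \sim \sum_j \mathbb{P}_{a-j}$, followed by a smoothing correction at $t=1$ ensuring $\mathbb{P}|_{t=1}=P$. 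Every finite truncation is exactly $a$-homogeneous, and the tail contributes only a smoothing density of the $r$-fibres, giving the required form of $\alpha_{\lambda*}\mathbb{P}-\lambda^a\mathbb{P}$.

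For the converse, I would set $\sigma := \mathbb{P}|_{t=0}\in\mathcal{E}'_r(TM)$. Restricting the almost-homogeneity identity to $t=0$ and combining it across several values of $\lambda$ forces the singular part of $\sigma$ to be genuinely $a$-homogeneous off the zero section, and its fibrewise Fourier transform is the candidate principal symbol $p_a$. Picking any classical $P_a$ with that principal symbol, the forward direction produces $\mathbb{P}^{(a)}\in\mathcal{E}'_r(\mathbb{T}M)$, almost $a$-homogeneous, with $\mathbb{P}^{(a)}|_{t=1}=P_a$ and agreeing with $\sigma$ at $t=0$ modulo smoothing. Then $\mathbb{P}-\mathbb{P}^{(a)}$ still satisfies the almost-homogeneity condition but vanishes at $t=0$ to first order, which via the zoom action translates into an order reduction for the kernel at $t=1$. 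Iterating produces a polyhomogeneous expansion of the symbol of $P=\mathbb{P}|_{t=1}$.

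The main obstacle is calibrating the interplay between the exact homogeneity delivered by the extension theorem and the almost-homogeneity demanded of $\mathbb{P}$: the Borel summation must be arranged so that truncation errors live in shrinking ideals of smoothing kernels, and in the converse one must verify that subtracting an exactly homogeneous piece genuinely reduces the order at $t=1$, not merely at $t=0$. The description supplied by this paper of all homogeneous extensions, parametrised by distributions supported on $TM\times\{0\}$, is precisely what matches the freedom to choose a pseudodifferential operator with prescribed principal symbol, and it is this freedom that makes the iterative construction well-defined.
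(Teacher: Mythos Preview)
This theorem is \emph{not} proved in the paper. It is stated in the Introduction as a result of Van Erp and Yuncken, cited from \cite{van_Erp_2017}, purely as background and motivation for the questions the paper goes on to study. There is therefore no ``paper's own proof'' to compare your proposal against.

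Your sketch attempts something the paper explicitly does not: to derive the Van Erp--Yuncken characterisation from the extension theorems developed here. That is an interesting exercise, but note that the logic is inverted relative to the paper's narrative --- the cited theorem is the context in which the extension problem arises, not a consequence of its solution. Moreover, the paper's final paragraphs (around the discussion of the Wodzicki residue) warn that one cannot in general find an \emph{exactly} $a$-homogeneous $\mathbb{P}\in H^a_\alpha(\mathbb{T}M)\cap C^{-\infty}_r(\mathbb{T}M)$ with $\mathbb{P}|_{t=1}=P$ for an arbitrary classical $P$; the obstruction is precisely the nonvanishing of residue-type invariants. Your forward direction relies on producing exactly homogeneous pieces $\mathbb{P}_{a-j}$ and then Borel-summing them, which is fine, but the passage from ``each summand is exactly homogeneous'' to ``the Borel sum is almost-homogeneous with smooth remainder'' needs the machinery of \cite{van_Erp_2017} (or an equivalent argument about cocycles of smoothing families), not merely the extension theorem of this paper.
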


Here $\alpha$ denotes the "zoom action" defined in \cite{debord2013adiabatic} $\alpha : \mr_+^*\curvearrowright \mathbb T_H M$. But one should notice that the zoom action was defined in the general context of the Deformation to the Normal Cone of a submanifold $V$ in a manifold $M$ : $\dnc(M,V)$. Also note that the adiabatic groupoid of \cite{debord2013adiabatic} and so the tangent groupoid $\mathbb T M$ are instances of Deformations to the Normal Cone. Also note that the filtered tangent groupoid $\mathbb T_H M$ can be obtained with iterated Deformations to the Normal Cone thanks to \cite{Mohsen2020}. If Van Erp and Yuncken ruled out the case of r-fibred (quasi)homogeneous distributions on $\mathbb T_H M$, in this context one might ask the naive question : what do the homogeneous distributions for the zoom action look like on $\dnc(M,V)$ ? This is what this work was meant to investigate. The initial problem of this work was the following. Given a manifold $M$ and an embedded submanifold $V$, take the Deformation to the Normal Cone $\dnc(M,V)$.\\

\emph{Given an $a$-homogeneous distribution $u\in\cald'\left(\dnc(M,V)\setminus V\times\mr\right)$, is there an $a$-homogeneous extension $\widetilde{u}\in\cald'\left(\dnc(M,V)\right)$, and is it unique ?}\\

\section*{Acknowledgements}\label{sec:acknowledgements}
\addcontentsline{toc}{section}{\nameref{sec:acknowledgements}}
I would like to thank my advisors Jean-Marie Lescure and Omar Mohsen for their continuous support through the research project that lead to this paper.

\section{General results on (weakly) homogeneous distributions}\label{section general results}

This section presents general results used in the following sections. Given a manifold $M$, we will denote by $C^{-\infty}(M)$ the dual of $C^\infty_c(M)$.

\begin{definition}[Homogeneous functions, distributions]\label{def homogeneous function}\ \\
    Let $M$ be a smooth manifold. Let $\mr_+^* \overset{\alpha}{\curvearrowright}M$ be a smooth group action and let $X$ be an open subset of $M$ stable under this action. Let $f\in \mc^X$ be a function, $f$ is called "positively homogeneous of degree  $a\in\mc$" or simply "homogeneous of degree  $a\in\mc$" if:
    \begin{equation}
        \forall s >0,\ \forall x \in X,\ f(\alpha_s(x))=s^a f(x).
    \end{equation}
    A distribution $u\in  C^{-\infty}(X)$  is called "positively homogeneous of degree  $a\in\mc$" or simply "homogeneous of degree  $a\in\mc$" if:
    \begin{equation}\label{hom_distrib_dnc_charac_eq}
        \forall s >0,\ \forall \phi \in  C^{\infty}_c(X),\ \dual{\alpha_s^*u}{\phi}:=\dual{u}{\alpha_{s^{-1}}^*\phi}=s^a\dual{u}{\phi}.
    \end{equation}
    Let us denote by $H^a_\alpha(X)$ the set of such distributions. When there is no ambiguity we will denote $H^a_\alpha:=H^a_\alpha(M)$.
\end{definition}

\begin{example}\label{example dirac homogeneous}
    Let $\alpha$ be the dilation action $\mr_+^*\curvearrowright \mr$ given by $\forall s>0,\ \forall t\in\mr,\ \alpha_s(t)=st$. Given $k\in\mn$ the distribution $\partial_t^k\delta_0$ is $-k$-homogeneous.
\end{example}

\begin{remark}\label{remark direct sum homogeneous spaces}
    Observe in the above definition that given $a\neq b$, the set $H^a_\alpha(X)$ is a vector space and $H^a_\alpha(X)\cap H^b_\alpha(X)=\{0\} $.
\end{remark}

\begin{definition}[\cite{hörmanderAnalysis1} section 2.3]Let $a\in\mc$ then 
    \begin{enumerate}
        \item if $\mathfrak{Re}(a)>0$ we denote 
        \begin{equation*}
            t_+^{a-1} = [ \varphi \mapsto \displaystyle\int_{\mr_+^*} t^{a-1} \varphi(t)dt ] \in  C^{-\infty}(\mr).
        \end{equation*}
        \item If $\mathfrak{Re}(a)\in]-\infty,0]\setminus\rrbracket-\infty,0\rrbracket$ we denote 
        \begin{equation*}
            t_+^{a-1} = [ \varphi \mapsto \displaystyle\int_{\mr_+^*} t^{a-1+k} (-1)^k\frac{\varphi^{(k)}(t)}{(a-1)(a-2)\hdots (a-1+k-1)}dt ] \in  C^{-\infty}(\mr)
        \end{equation*}
        where $k=\lfloor -a \rfloor +1$.
        \item If $\mathfrak{Re}(a)\in\rrbracket-\infty,0\rrbracket$ we denote 
        \begin{equation*}
            t_+^{a-1} = [ \varphi \mapsto \displaystyle\int_{\mr_+^*} \ln(t) (-1)^k\frac{\varphi^{(k)}(t)}{(a-1)(a-2)\hdots (a-1+k-1)}dt ] \in  C^{-\infty}(\mr)
        \end{equation*}
        where $k=-a+1$.
    \end{enumerate}
    We will also use the distribution $t_-^{a-1}:=[ \varphi \mapsto \dual{t_+^{a-1}}{\varphi(-t)} ] \in  C^{-\infty}(\mr).$
\end{definition}

\begin{prop}[\cite{hörmanderAnalysis1} section 2.3]\label{prop properties of ta}\ \\
    Given $a\in\mc$, $t_+^{a-1}$ coincides with the function $t^{a-1}$ on $\mr_+^*$ and $\operatorname{supp}(t_+^{a-1})\subset\mr_+$. Moreover if $a\notin\rrbracket-\infty,0\rrbracket$, the distribution $t_+^{a-1}$ is $a$-homogeneous for the dilation action $\mr_+^*\curvearrowright \mr$. If $a\in\rrbracket-\infty,0\rrbracket$ one has:
    \begin{equation*}
        \forall s>0,\ \dual{t_+^{a-1}}{\varphi(t)}=s^{-a}\dual{t_+^{a-1}}{\varphi(st)}- \ln(s) \frac{\varphi^{(-a)}(0)}{(-a)!}.
    \end{equation*}
\end{prop}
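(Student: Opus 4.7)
The statement contains four independent claims: (i) that $t_+^{a-1}$ extends the locally integrable function $t^{a-1}\big|_{\mr_+^*}$, (ii) that $\operatorname{supp}(t_+^{a-1})\subset\mr_+$, (iii) that this distribution is $a$-homogeneous when $a$ is not a non-positive integer, and (iv) the explicit log-anomaly otherwise. I would treat them in this order, relying throughout on two techniques: integration by parts, used to verify that the three formulas of the definition agree on their common domain, and the change of variables $\tau=s^{-1}t$, used to check scaling.

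For (i), fix $\varphi\in C_c^\infty(\mr_+^*)$; every derivative of $\varphi$ is compactly supported in $\mr_+^*$. Case~1 is immediate. In cases~2 and~3, I would integrate $\int_0^\infty t^{a-1}\varphi(t)\,dt$ by parts $k$ times (with one further integration of $t^{-1}$ producing $\ln t$ in case~3), observe that boundary terms vanish, and check that the accumulated constants telescope to the prefactor $(a-1)(a-2)\cdots(a-1+k-1)$ that appears in the definition. Claim (ii) follows immediately because all three integrands are supported on $\mr_+^*$ and hence annihilate any $\varphi\in C_c^\infty(\mr_-^*)$.

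For (iii), I would set $\psi(t)=\varphi(s^{-1}t)$ and compute $\langle\alpha_s^*t_+^{a-1},\varphi\rangle=\langle t_+^{a-1},\psi\rangle$ using the substitution $\tau=s^{-1}t$. Since $\psi^{(k)}(t)=s^{-k}\varphi^{(k)}(s^{-1}t)$, the factor $t^{a-1+k}$ contributes $s^{a-1+k}$, the shifted derivative contributes $s^{-k}$, and $dt=s\,d\tau$ contributes one more $s$; these combine cleanly to $s^a$, giving $a$-homogeneity in both cases~1 and~2 of the definition.

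For (iv), the same substitution applies but $\ln t=\ln s+\ln\tau$ splits the integral. The $\ln\tau$ piece reproduces the homogeneous scaling and, after swapping $s\leftrightarrow s^{-1}$ to translate from the pullback $\varphi(s^{-1}\cdot)$ of (iii) to the $\varphi(s\cdot)$ convention of the statement, supplies the main term on the right-hand side. The $\ln s$ piece factors out of the integral and reduces, via $\int_0^\infty\varphi^{(k)}(\tau)\,d\tau=-\varphi^{(k-1)}(0)$ together with $k-1=-a$, to a finite multiple of $\ln s\cdot\varphi^{(-a)}(0)$. Solving the resulting linear equation for $\langle t_+^{a-1},\varphi\rangle$ gives the displayed identity. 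The main obstacle is precisely the bookkeeping here: three independent sign sources --- the $(-1)^k$ built into the definition, the sign from integrating a top derivative against $1$ on $\mr_+^*$, and the inversion $s\leftrightarrow s^{-1}$ --- must all be tracked in parallel, and it is easy to pick up a spurious factor. A prudent sanity check is to verify the formula first in the cleanest case $a=0$ (where $(-a)!=1$ and everything collapses) before extending to general $a=-n$.
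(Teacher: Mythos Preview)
The paper does not supply its own proof of this proposition: it is stated with a bare citation to H\"ormander, \emph{The Analysis of Linear Partial Differential Operators I}, \S2.3, and no argument is given in the text. So there is nothing in the paper to compare your attempt against.

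That said, your plan is sound and is essentially the standard argument one finds in H\"ormander. Integration by parts handles claim~(i), the support claim~(ii) is immediate from the domain of integration, and the substitution $\tau=s^{-1}t$ cleanly produces the homogeneity in cases~1 and~2 of the definition. For claim~(iv) your splitting $\ln t=\ln s+\ln\tau$ is exactly the right move, and you are right to flag the bookkeeping: between the $(-1)^k$ in the definition, the sign from $\int_0^\infty\varphi^{(k)}=-\varphi^{(k-1)}(0)$, the constant $(a-1)\cdots(-1)$ in the denominator, and the $s\leftrightarrow s^{-1}$ swap, it is genuinely easy to land on the wrong power of $s$ or the wrong numerical prefactor. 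Your suggestion to first verify $a=0$ by hand and only then pass to general $a=-n$ is the correct safeguard; I would also recommend checking $a=-1$ explicitly, since that is the first case where $s^{a}$ and $s^{-a}$ actually differ and where the denominator constant is no longer $\pm1$.
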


\begin{prop}[Homogeneous distributions for the dilation action]\label{th homogeneous functions and the trivial action}\ \\
    Take any manifold $M$ and the action $\mr_+^*\overset{\alpha}{\curvearrowright} M\times\mr$ given by:
    \begin{equation*}
        \alpha = s \mapsto \begin{bmatrix}
                M\times\mr_+^* & \to & M\times\mr\\
                (p,t) & \mapsto & (p,st)
            \end{bmatrix}.
    \end{equation*}
    Let $a\in\mc$ and let $u\in C^{-\infty}(M\times\mr)$, assume $u$ to be $a$-homogeneous for the above action.
    Then there is $v\in C^{-\infty}(M)$ such that
    \begin{equation*}
        u|_{M\times\mr_+^*}=v\otimes t_+^{a-1}|_{\mr_+^*}.
    \end{equation*}
\end{prop}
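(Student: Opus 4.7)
The plan is to use the infinitesimal form of $a$-homogeneity together with the algebraic identity
\begin{equation*}
(t\partial_t + a)\psi = t^{1-a}\partial_t(t^a \psi),
\end{equation*}
valid on $M\times\mr_+^*$ where $t^a$ is smooth and invertible. Differentiating the relation $\dual{u}{\varphi(p,t/s)} = s^a \dual{u}{\varphi(p,t)}$ at $s=1$ yields $\dual{u}{(t\partial_t + a)\varphi} = 0$ for every $\varphi \in C^\infty_c(M\times\mr_+^*)$.

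Next I would fix a cutoff $\chi_0 \in C^\infty_c(\mr_+^*)$ normalized by $\int_0^\infty t^{a-1}\chi_0(t)\,dt = 1$; such a $\chi_0$ exists since rescaling a nonnegative bump concentrated near $t=1$ gives integrals close to $1^{a-1}=1$. Define $v \in C^{-\infty}(M)$ by $\dual{v}{\psi} := \dual{u}{\psi\otimes\chi_0}$, which is a distribution by continuity of $\psi\mapsto\psi\otimes\chi_0$. Because $t_+^{a-1}$ coincides with the smooth function $t^{a-1}$ on $\mr_+^*$ by Proposition \ref{prop properties of ta}, the desired factorization $u|_{M\times\mr_+^*} = v\otimes t_+^{a-1}|_{\mr_+^*}$ is equivalent to $\dual{u}{\varphi} = \dual{u}{T\varphi\otimes\chi_0}$ for every $\varphi$, where $T\varphi(p) := \int_0^\infty t^{a-1}\varphi(p,t)\,dt \in C^\infty_c(M)$. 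Setting $\phi := \varphi - T\varphi\otimes\chi_0$, one checks $T\phi \equiv 0$, so the proposition reduces to the key claim: if $\phi \in C^\infty_c(M\times\mr_+^*)$ satisfies $T\phi \equiv 0$, then $\dual{u}{\phi} = 0$.

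To establish the claim I would exhibit $\psi \in C^\infty_c(M\times\mr_+^*)$ with $\phi = (t\partial_t + a)\psi$; the infinitesimal homogeneity then gives $\dual{u}{\phi} = 0$ directly. By the key identity this amounts to finding $h \in C^\infty_c(M\times\mr_+^*)$ with $\partial_t h = t^{a-1}\phi$, after which $\psi := t^{-a}h$ works. The candidate $g := t^{a-1}\phi$ is in $C^\infty_c(M\times\mr_+^*)$ and satisfies $\int_0^\infty g(p,t)\,dt = T\phi(p) = 0$, so $h(p,t) := \int_0^t g(p,s)\,ds$ is smooth and compactly supported in $\mr_+^*$ (it vanishes both before and after the $t$-support of $g$). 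Hence $\psi = t^{-a}h \in C^\infty_c(M\times\mr_+^*)$, and $(t\partial_t + a)\psi = t^{1-a}\partial_t h = t^{1-a}g = \phi$ as required.

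The only real subtlety I anticipate is justifying the differentiation under the distributional pairing used to pass from finite-$s$ homogeneity to the infinitesimal statement. This follows from the fact that $s \mapsto \alpha_{s^{-1}}^*\varphi$ is a smooth curve in $C^\infty_c(M\times\mr_+^*)$ whose supports lie in a fixed compact set for $s$ in any compact subinterval of $\mr_+^*$, so by sequential continuity of $u$ the function $s\mapsto\dual{u}{\alpha_{s^{-1}}^*\varphi}$ is smooth on $\mr_+^*$ and can be differentiated at $s=1$. The rest of the proof is an explicit algebraic construction.
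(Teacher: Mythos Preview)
Your proof is correct, but it follows a genuinely different route from the paper's. The paper works directly with the finite group action: it picks $\rho\in C^\infty_c(\mr_+^*)$ with $\int \rho(s)s^{-1}\,ds=1$, averages the identity $\dual{u}{\phi\otimes\psi}=s^a\dual{u}{\alpha_s^*(\phi\otimes\psi)}$ against $\rho(s)s^{-1}\,ds$, swaps integral and distribution, and after a change of variable $s\to st^{-1}$ reads off $v(\phi)=\dual{u_{p,t}}{t^a\rho(t^{-1})\phi(p)}$. Your argument instead passes to the infinitesimal generator: you derive the Euler equation $\dual{u}{(t\partial_t+a)\varphi}=0$, then prove the clean algebraic lemma that on $C^\infty_c(M\times\mr_+^*)$ the kernel of the moment map $T\varphi=\int t^{a-1}\varphi\,dt$ coincides with the image of $(t\partial_t+a)$, via an explicit antiderivative. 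Both methods introduce a non-canonical test function ($\rho$ versus $\chi_0$) and both yield the same $v$; the paper's averaging trick is more self-contained and reappears almost verbatim in the later \Cref{lemma characterization of homogeneous distributions on R+* x N x R}, while your Euler-operator formulation isolates a reusable structural fact and makes the role of the normalization $\int t^{a-1}\chi_0=1$ transparent. One small point: your justification for the existence of $\chi_0$ is fine, but you could simply note that $t^{a-1}$ is nowhere vanishing on $\mr_+^*$, so any bump has nonzero integral after possibly shrinking its support, and then divide.
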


\begin{proof}
    Since the distribution spaces are nuclear we only need to evaluate $u$ against tensor products $\phi\otimes\psi\in C^{\infty}_c(M)\otimes C^{\infty}_c(\mr_+^*)$ to characterize it.
    Choose $\rho\in C^{\infty}_c(\mr_+^*,\mr_+)$ such that $\displaystyle\int_{\mr_+^*} \rho(s) s^{-1}ds =1$ then 
    \begin{align*}
        \dual{u}{\phi\otimes\psi} & =\int_0^{+\infty} \rho(s)\dual{u}{\phi\otimes\psi}s^{-1}ds\\
        & =\int_0^{+\infty} s^{a-1} \dual{u_{p,t}}{\rho(s)\phi(p)\psi(st)}ds &\text{ By homogeneity of }u.\\
         & =\dual{u_{p,t}}{\int_0^{+\infty} s^{-a-1}\rho(s)\phi(p)\psi(st)ds}.
    \end{align*}
    The last equality is given by the continuity of $u$ and by the approximation of the parameter integral with a Riemann sum converges in $ C^{\infty}_c(M\times\mr_+^*)$. Finally a change of variable $s\to s t^{-1}$ yields 
    \begin{align*}
        \dual{u}{\phi\otimes\psi} & =\dual{u_{p,t}}{\int_0^{+\infty} s^{-a-1} t^{a}\rho(s t^{-1})\phi(p)\psi(s)ds}\\
        & = \int_0^{+\infty} \dual{u_{p,t}}{s^{-a} t^{a}\rho(s t^{-1})\phi(p)}\psi(s)s^{-1}ds
    \end{align*}
    and since $u$ is $a$-homogeneous 
    \begin{align*}
        \dual{u}{\phi\otimes\psi} & = \int_0^{+\infty} s^a \dual{u_{p,t}}{ t^{a}\rho(t^{-1})\phi(p)}\psi(s)s^{-1}ds.
    \end{align*}
    Now notice that $v:=[\phi \mapsto \dual{u_{p,t}}{ t^{a}\rho(t^{-1})\phi(p)} ] \in C^{-\infty}(M)$ and the proof is done.
\end{proof}

\begin{corollary}[Characterization of homogeneous distributions]\ \\
    Recall the setting of the previous proposition. We can characterize the set $H^a_\alpha(M\times\mr)$ as follows.
    \begin{itemize}
        \item If $a\in\mc\setminus\rrbracket -\infty,0\rrbracket$, then:
        \begin{equation*}
            H^a_\alpha(M\times\mr)= C^{-\infty}(M)\otimes t_+^{a-1} \oplus  C^{-\infty}(M)\otimes t_-^{a-1}.
        \end{equation*}
        \item If $a\in\rrbracket -\infty,0\rrbracket$, then:
        \begin{equation*}
            H^a_\alpha(M\times\mr)= C^{-\infty}(M)\otimes (t_+^{a-1}+(-1)^{a-1} t_-^{a-1}) \oplus  C^{-\infty}(M)\otimes \partial_t^{-a} \delta_0.
        \end{equation*}
    \end{itemize}
\end{corollary}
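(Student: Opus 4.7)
The plan is to establish both inclusions in each case. The $\supseteq$ direction is a short verification. In Case 1, the distributions $t_\pm^{a-1}$ are $a$-homogeneous by Proposition \ref{prop properties of ta} (the hypothesis $a \notin \rrbracket-\infty,0\rrbracket$ is exactly what kills the log anomaly), and tensoring by a distribution $v \in C^{-\infty}(M)$ preserves homogeneity. In Case 2, I would apply the log-anomaly formula of Proposition \ref{prop properties of ta} twice (once to $t_+^{a-1}$ and once to $t_-^{a-1}$ via the reflection $t \mapsto -t$) to check that the two anomalies cancel precisely in the combination $t_+^{a-1} + (-1)^{a-1} t_-^{a-1}$, while $\partial_t^{-a}\delta_0$ is $a$-homogeneous by Example \ref{example dirac homogeneous}.

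For the reverse inclusion, let $u \in H^a_\alpha(M\times\mathbb{R})$. Applying Proposition \ref{th homogeneous functions and the trivial action} on $M\times\mathbb{R}_+^*$ (and once more on $M\times\mathbb{R}_-^*$ via the $t \mapsto -t$ pullback), I obtain $v_\pm \in C^{-\infty}(M)$ with $u|_{M\times\mathbb{R}_\pm^*} = v_\pm \otimes t_\pm^{a-1}|_{\mathbb{R}_\pm^*}$. Set $r := v_+ \otimes t_+^{a-1} + v_- \otimes t_-^{a-1}$; the difference $u - r$ is then supported on $M\times\{0\}$, so by the Schwartz structure theorem for distributions supported on a closed submanifold it can be written as $u - r = \sum_{k\geq 0} w_k \otimes \partial_t^k\delta_0$ with $w_k \in C^{-\infty}(M)$ (locally finite sum).

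The heart of the argument is to apply $\alpha_s^* - s^a \operatorname{Id}$ to the identity $u = r + \sum_k w_k\otimes\partial_t^k\delta_0$. The LHS vanishes by hypothesis. On the RHS, $\partial_t^k\delta_0$ is $(-k)$-homogeneous (Example \ref{example dirac homogeneous}), so the $w_k$-contribution is $\sum_k (s^{-k}-s^a)\,w_k\otimes\partial_t^k\delta_0$, while $\alpha_s^* r - s^a r$ is computed from Proposition \ref{prop properties of ta}: in Case 1 it is zero, and in Case 2 it is a multiple of $\partial_t^{-a}\delta_0$ whose coefficient is a nonzero multiple of $(-1)^{-a}v_+ + v_-$. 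Using the $C^{-\infty}(M)$-linear independence of the family $(\partial_t^k\delta_0)_{k\geq 0}$, each $k$-component of the equation must vanish identically in $s>0$. In Case 1, since no nonnegative integer $k$ satisfies $-k = a$, this forces $w_k = 0$ for every $k$, giving $u = r$. In Case 2, the $k = -a$ equation forces $v_- = (-1)^{a-1} v_+$ (using $(-1)^{-a} = (-1)^a$ for integer $a$), whereupon $r$ regroups as $v_+\otimes(t_+^{a-1}+(-1)^{a-1}t_-^{a-1})$, and the $k \neq -a$ equations give $w_k = 0$. Uniqueness (directness of the sums) follows by restricting to $M\times\mathbb{R}_+^*$ to read off the coefficient on the $t_+^{a-1}$ side, and then recovering the $\partial_t^{-a}\delta_0$-part from what remains on $M\times\{0\}$. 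The main obstacle is Case 2: carefully tracking the $(-1)^{-a}$ sign produced by the reflection when transferring the log anomaly from $t_+^{a-1}$ to $t_-^{a-1}$, so that the forced compatibility on $v_\pm$ comes out as exactly $v_- = (-1)^{a-1} v_+$.
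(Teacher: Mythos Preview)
Your proposal is correct and follows essentially the same route as the paper: restrict to $M\times\mr^*$ and invoke \Cref{th homogeneous functions and the trivial action} to obtain $v_\pm$, form the global candidate $r=v_+\otimes t_+^{a-1}+v_-\otimes t_-^{a-1}$, write $u-r$ as a transverse-derivative expansion in $\partial_t^k\delta_0$, and then exploit homogeneity together with \Cref{prop properties of ta}, \Cref{example dirac homogeneous} and \Cref{remark direct sum homogeneous spaces}. Your device of applying $\alpha_s^*-s^a\operatorname{Id}$ to the full identity and reading off the $\partial_t^k\delta_0$-coefficients is exactly the ``computation of obstructions'' the paper alludes to; in Case~2 your treatment is in fact tighter than the paper's sketch, since the sentence ``$u-\widetilde u\in H^a_\alpha$'' there is only literally correct once the log-anomaly has been absorbed, which is precisely what your $k=-a$ equation does (the $(s^{a}-s^{a})w_{-a}$ term vanishes, forcing $v_+ +(-1)^{-a}v_-=0$ and leaving $w_{-a}$ free).
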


\begin{proof}
    Take $u\in H^a_\alpha(M\times\mr)$, thanks to \Cref{th homogeneous functions and the trivial action} 
    \begin{equation*}
        \exists v_-,v_+\in C^{-\infty}(M),\ u|_{M\times\mr^*}=v_-\otimes t_-^{a-1}|_{\mr^*} + v_+\otimes t_+^{a-1}|_{\mr^*}.
    \end{equation*}
    The above formula defines a distribution $\widetilde u$ on $M\times\mr$. One needs to compute the obstructions for it to be $a$-homogeneous. These are provided by \Cref{prop properties of ta}. And then observe that $\operatorname{supp}(u-\widetilde u)\subset M\times\{0\}$. Necessarily $u-\widetilde u=\sum_{k=1}^n v_k\otimes \partial_t^k\delta_0$ where $\forall k, v_k\in C^{-\infty}(M)$. Then since $u-\widetilde u \in H^a_\alpha$ \Cref{remark direct sum homogeneous spaces} and \Cref{example dirac homogeneous} enable us to conclude.
\end{proof}

In \cite{meyer1997wavelets}, a notion of weak homogeneity is presented for distributions on $\mr^n$. This notion can be seamlessly extended to vector bundles.

Let $V$ be a manifold, of dimension $m'$, let $W$ be a vector bundle over $V$ of rank $n$. Assume that $\pi: W\to V$ is equipped with an Euclidean metric $g$. Consider the dilation action $\alpha:  \mr_+^*\curvearrowright W$:
\begin{equation*}
    \forall s>0,\ \alpha_s=\begin{bmatrix}
        W & \to & W \\
        (p,w) & \mapsto & (p,sw)
    \end{bmatrix}.
\end{equation*}

We will denote $W\setminus\{0\}:=W\setminus V\times\{0\}$, this open set is stable by the above action.

\begin{definition}[Weakly homogeneous distributions around the base]\label{def O weakly homogeneous W}\ \\
    Given $a\in\mr$ we will denote $O^a_\alpha$ the subset of $ C^{-\infty}(W)$ of distributions $u$ such that:
    \begin{equation*}
        \forall \varphi\in C^{\infty}_c(W),\ \exists C(\varphi)>0,\ \forall s\in]0,1],\ |\dual{u}{\alpha_{s^{-1}}^*\varphi}|\leq s^{a} C(\varphi).
    \end{equation*}
    And we will denote by $\overset{\circ}{O^a_\alpha}$ the set of distributions on $W\setminus\{0\}$ satisfying the above estimate.
\end{definition}

\begin{definition}[Weakly homogeneous distributions]\label{def E weakly homogeneous W}\ \\
    Given $a\in\mr$ we will denote $E^a_\alpha$ the subset of $ C^{-\infty}(W)$ of distributions $u$ such that:
    \begin{equation*}
        \forall \varphi\in C^{\infty}_c(W),\ \exists C(\varphi)>0,\ \forall s\in\mr_+^*,\ |\dual{u}{\alpha_{s^{-1}}^*\varphi}|\leq s^{a} C(\varphi).
    \end{equation*}
    And we will denote by $\overset{\circ}{E^a_\alpha}$ the set of distributions on $W\setminus\{0\}$ satisfying the above estimate.
\end{definition}

We might omit the $_\alpha$ subscript in the above spaces when there is no ambiguity on the action.

\begin{prop}[Some properties]\label{prop some properties weak hom}
    \begin{enumerate}
        \item First notice that for $a\in\mr$, we have $E^a_\alpha\subset O^a_\alpha$ and $\overset{\circ}{E^a_\alpha}\subset \overset{\circ}{O^a_\alpha}$.
        \item Let $F\in\{E_\alpha,\overset{\circ}{E_\alpha},O_\alpha,\overset{\circ}{O_\alpha}\}$, then given a vertical vector field $X\in\Gamma(TW)$ and $a\in\mr$ we have 
        \begin{equation*}
            X(F^a) \subset F^{a-1}.
        \end{equation*}
        \item Given $b\in\mc$ and a smooth $b$-homogeneous function $f$ on the appropriate domain we have that
        \begin{equation*}
            f\cdot F^a \subset F^{a+\mathfrak{Re}(b)}.
        \end{equation*}
        \item One can easily prove that given $u\in C^{-\infty}(W)$, $F\in\{O_\alpha,\overset{\circ}{O_\alpha}\}$ and a cutoff function $\chi\in C^{\infty}(W)$ around $V$
        \begin{equation*}
            u\in F^{a} \iff \chi\cdot u F^{a}.
        \end{equation*}
        This is why a distribution in $F^{a}$ is call weakly homogeneous "around the base", it is a local assertion around $V$.
        \item Given $F\in\{O_\alpha,\overset{\circ}{O_\alpha}\}$ and two real numbers $a<b$ we have $F^b\subset F^{a}$. Note that this is not the case for the spaces of "globally" weakly homogeneous distributions of \Cref{def E weakly homogeneous W}.
    \end{enumerate}
\end{prop}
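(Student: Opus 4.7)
The plan is to verify each of the five items separately by a short manipulation of the defining estimates; there is no single conceptual hurdle, only some care in bookkeeping.

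Items (1) and (5) are essentially tautological. Item (1) follows by restricting the range of $s$ from $\mr_+^*$ to $]0,1]$ in the defining estimate of $E^a_\alpha$ (resp. $\overset{\circ}{E^a_\alpha}$), which yields the estimate of $O^a_\alpha$ (resp. $\overset{\circ}{O^a_\alpha}$). Item (5) uses the monotonicity of the exponent map on $]0,1]$: for $s\in]0,1]$ and $a<b$ one has $s^b\leq s^a$, so a bound by $s^b C(\varphi)$ is a fortiori a bound by $s^a C(\varphi)$. This monotonicity fails on $\mr_+^*$, which is exactly why the analogous inclusion is not true for $E^a_\alpha$ or $\overset{\circ}{E^a_\alpha}$.

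For items (2) and (3) the key is a commutation relation between the operation in question (multiplication by $f$ or differentiation by $X$) and the pullback $\alpha_{s^{-1}}^*$. For (3), using the $b$-homogeneity of $f$, i.e.\ $\alpha_s^*f=s^b f$, together with the product rule for pullback, one obtains $f\cdot\alpha_{s^{-1}}^*\varphi = \alpha_{s^{-1}}^*((\alpha_s^*f)\cdot\varphi) = s^b\,\alpha_{s^{-1}}^*(f\varphi)$; feeding $f\varphi\in C^\infty_c$ into the $F^a$-estimate for $u$ immediately yields $|\dual{fu}{\alpha_{s^{-1}}^*\varphi}|\leq s^{\mathfrak{Re}(b)+a}C(f\varphi)$. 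For (2), I would first check the chain-rule identity $\partial_{w_i}\circ\alpha_{s^{-1}}^* = s^{-1}\alpha_{s^{-1}}^*\circ\partial_{w_i}$ for a coordinate vertical field, then pass to distributions by duality $\dual{Xu}{\psi}=-\dual{u}{X\psi}$ (no divergence correction arises for constant-coefficient fiber derivatives), and finally extend to a general vertical $X=\sum_i a_i(p,w)\partial_{w_i}\in\Gamma(TW)$ by writing $X$ in this decomposition and absorbing the smooth coefficients via item (3) applied to each piece; the net effect is the factor $s^{-1}$ in the estimate.

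Item (4) is a pure locality argument. Writing $u=\chi u+(1-\chi)u$, it suffices to show that $(1-\chi)u\in F^{a'}$ for every $a'\in\mr$, since then the estimate for $u$ is equivalent to that for $\chi u$. Because $1-\chi$ vanishes in a neighborhood $U$ of $V$ while the support of $\alpha_{s^{-1}}^*\varphi$ equals $\alpha_s(\operatorname{supp}\varphi)$ and contracts toward $V\cap\operatorname{supp}\varphi$ as $s\to0$, for all sufficiently small $s$ the two supports are disjoint, and so $\dual{(1-\chi)u}{\alpha_{s^{-1}}^*\varphi}=0$. Combined with the trivial bound on any compact subinterval of $]0,1]$ bounded away from $0$, this gives $(1-\chi)u\in O^{a'}_\alpha$ for every $a'$. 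Of the five items I expect item (2) to be the main technical point: handling a general vertical $X$ with $w$-dependent coefficients requires a clean decomposition into a translation-invariant part plus a multiplication-by-smooth-function correction, and one must check that this combination is compatible with the convention for ``vertical vector field'' used in the paper.
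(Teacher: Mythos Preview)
The paper does not prove this proposition; it is stated as a list of routine facts and used freely thereafter. Your outline is therefore already more than the paper offers, and items (1), (3), (4), (5) are handled correctly and in the only natural way.

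The one substantive point is item (2). Your chain-rule and duality argument correctly gives $\partial_{w_i}(F^a)\subset F^{a-1}$ for the coordinate vertical fields, and this is all the paper actually uses (see the proof of \Cref{Schwartz tempered distrib lemma vector bundle} and \Cref{remark epsilon decomposition of Schwartz functions}, where only the $\partial_{w_i}$ and the homogeneous operators of \Cref{def homogeneous differential operator} appear). However, your proposed extension to a general vertical $X=\sum_i a_i(p,w)\,\partial_{w_i}$ ``via item (3)'' does not go through as written: item (3) requires a \emph{homogeneous} multiplier, and a generic smooth $a_i$ is not homogeneous. In fact the inclusion $X(E^a_\alpha)\subset E^{a-1}_\alpha$ is false for arbitrary vertical $X$: on $W=\mr\to\{0\}$ the Euler field $X=w\,\partial_w$ sends $u=w\,dw\in E^2_\alpha$ to itself, and $u\notin E^1_\alpha$. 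So either one reads ``vertical vector field'' in the restricted sense of a fibrewise-constant field (which matches the paper's later usage), or one restricts item (2) to the $O$-spaces, where an equicontinuity (Banach--Steinhaus) argument does handle arbitrary smooth coefficients since $\alpha_s^*a_i$ stays bounded in $C^\infty$ on compacta for $s\in(0,1]$. Your closing caveat about checking the paper's convention for ``vertical vector field'' was exactly on point.
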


Then one can easily adapt the proof Theorem 2.1 of \cite{meyer1997wavelets} to obtain the following.
\begin{theorem}[Extension of distributions in $W\setminus\{0\}$]\label{theorem Meyers extension theorem W}\ \\
    Let $a\in\mr\setminus\rrbracket -\infty,0\rrbracket$, the restriction map:
    \begin{equation*}
       \cdot|_{W\setminus\{0\}}:  C^{-\infty}(W) \to  C^{-\infty}(W\setminus\{0\}),
    \end{equation*}
    restricts to a bijection 
    \begin{equation*}
        \cdot|_{W\setminus\{0\}}: E^a_\alpha \to \overset{\circ}{E^a_\alpha}.
    \end{equation*}
    We will denote $e_a: \overset{\circ}{E^a_\alpha} \to E^a_\alpha$ its inverse.
\end{theorem}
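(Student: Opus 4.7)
The plan is to prove bijectivity in two steps: injectivity first, then surjectivity via Meyer's dyadic construction from \cite{meyer1997wavelets}.

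\textbf{Injectivity.} I would suppose $u \in E^a_\alpha$ vanishes on $W \setminus \{0\}$, so $\operatorname{supp}(u) \subset V$. The local structure theorem for distributions supported on a submanifold yields, in a trivialization $W|_U \cong U \times \mr^n$, a finite decomposition $u = \sum_{|\beta| \leq N} v_\beta \otimes \partial^\beta \delta_0$ with $v_\beta \in C^{-\infty}(U)$. Since $\partial^\beta \delta_0$ is $-|\beta|$-homogeneous for the fibre dilation,
$$\langle u, \alpha_{s^{-1}}^* \varphi \rangle = \sum_{|\beta| \leq N} s^{-|\beta|} \langle v_\beta \otimes \partial^\beta \delta_0, \varphi \rangle.$$
The bound $|\cdot| \leq s^a C(\varphi)$ with $a > 0$ forces this to vanish as $s \to 0^+$. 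Separating distinct powers of $s^{-|\beta|}$ by induction from the largest $|\beta|$ gives $v_\beta = 0$ for all $\beta$, hence $u = 0$.

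\textbf{Surjectivity: construction.} I would fix a radial cutoff $\chi \in C^\infty(W)$ with $\chi = 1$ for $|w|_g \geq 1$, $\chi = 0$ for $|w|_g \leq 1/2$, and set $\eta_0 := \chi - \chi \circ \alpha_2$, $\eta_j := \eta_0 \circ \alpha_{2^j}$ for $j \in \mz$. Then $\eta_j$ is supported in a dyadic shell of radius $\sim 2^{-j}$ well away from $V$, and $\sum_{j \in \mz} \eta_j = 1$ on $W \setminus V$. Each $\eta_j u$ is a bona fide distribution on the whole of $W$, and I would define
$$\tilde u := \sum_{j \in \mz} \eta_j u.$$
For $\varphi \in C^\infty_c(W)$, the terms with $j \leq 0$ are finite in number by support considerations. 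For $j \geq 0$, the identity $\eta_j \varphi = \alpha_{2^j}^*(\eta_0 \cdot \alpha_{2^{-j}}^* \varphi)$ combined with the weak homogeneity of $u$ on the fixed compact $K' := \operatorname{supp}(\eta_0) \subset W \setminus V$ yields $|\langle \eta_j u, \varphi\rangle| \leq C \cdot 2^{-ja}$; the rescaled family $\eta_0 \cdot \alpha_{2^{-j}}^* \varphi$ stays bounded in $C^\infty_c(K')$ as $j \to +\infty$, keeping the constant uniform. Summability follows from $a > 0$, and by construction $\tilde u|_{W \setminus \{0\}} = u$.

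\textbf{Verifying $\tilde u \in E^a_\alpha$.} For $s > 0$ I would reapply the dyadic decomposition, this time to $\alpha_{s^{-1}}^* \varphi$: writing $\eta_j \alpha_{s^{-1}}^* \varphi = \alpha_{2^j}^* \phi_{j,s}$ with $\phi_{j,s} := \eta_0 \cdot \alpha_{(2^j s)^{-1}}^* \varphi$, weak homogeneity yields $|\langle \eta_j u, \alpha_{s^{-1}}^* \varphi \rangle| \leq 2^{-ja} C(\phi_{j,s})$. A support analysis of $\phi_{j,s}$ forces $2^j s$ to stay above a threshold depending only on $\operatorname{supp}(\varphi)$, so summing the relevant range of $j$ produces a geometric tail bounded by $O(s^a)$.

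\textbf{Main obstacle.} The principal technical difficulty is twofold. First, the weak-homogeneity estimate in Definition~\ref{def E weakly homogeneous W} is only pointwise in $\varphi$, and it must be upgraded, via a Banach--Steinhaus argument on the Fréchet space $C^\infty_c(K')$, to a uniform bound $|\langle u, \alpha_{s^{-1}}^* \psi\rangle| \leq s^a C \|\psi\|_{C^N(K')}$ before any of the summations above can be justified. Second, the chain-rule factors $(2^j s)^{-|\beta|}$ appearing in the seminorms of $\phi_{j,s}$ would blow up were it not for the support constraint that bounds $2^j s$ below; tracking this delicately and showing that the geometric series sums to exactly the right rate is where the full force of the hypothesis $a > 0$ (i.e.\ $a \notin \rrbracket-\infty,0\rrbracket$) is used.
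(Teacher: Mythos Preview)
Your reading of the hypothesis is wrong, and this breaks both halves of the argument. In the paper's notation $\rrbracket-\infty,0\rrbracket$ denotes the set of non-positive \emph{integers} $\{0,-1,-2,\dots\}$ (see the definition of $t_+^{a-1}$ earlier in \S\ref{section general results}), so $a\in\mr\setminus\rrbracket-\infty,0\rrbracket$ allows any real $a$ that is not a non-positive integer, in particular $a=-\tfrac12$, $a=-7.3$, etc. You have collapsed this to ``$a>0$'', which is exactly the hypothesis of the \emph{next} theorem (\ref{theorem locally integrable extension theorem W}), not this one.

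Concretely: in your surjectivity step the dyadic series $\sum_{j\ge 0}\langle \eta_j u,\varphi\rangle$ is controlled by $\sum_{j\ge 0}2^{-ja}$, which diverges as soon as $a\le 0$. Meyer's actual argument for negative non-integer $a$ requires a renormalisation: one subtracts from $\varphi$ its vertical Taylor polynomial of order $N=\lfloor -a\rfloor$ before pairing, so that the effective exponent becomes $a+N+1>0$ and the series converges; the condition $a\notin\rrbracket-\infty,0\rrbracket$ is precisely what guarantees that this subtraction does not spoil the $E^a_\alpha$ estimate of the resulting extension (the critical integer cases produce the logarithmic terms visible in \Cref{prop properties of ta}). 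Your injectivity argument has the same blind spot: for $a<0$ the limit $s\to0^+$ gives nothing, and one must instead exploit that the two-sided bound over \emph{all} $s>0$ forces each coefficient $v_\beta\otimes\partial^\beta\delta_0$ (which is exactly $(-|\beta|)$-homogeneous) to vanish whenever $a\neq -|\beta|$, i.e.\ whenever $a$ is not a non-positive integer. Once you add the Taylor-subtraction step and the two-sided injectivity argument, what you have is essentially Meyer's proof adapted fibrewise, which is what the paper invokes.
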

And we can deduce a second result from the proof of Theorem 2.1 in \cite{meyer1997wavelets}.

\begin{theorem}[Extension of locally integrable distributions in $W\setminus\{0\}$]\label{theorem locally integrable extension theorem W}\ \\
    If $a>0$, the restriction map $\cdot|_{W\setminus\{0\}}:  C^{-\infty}(W) \to  C^{-\infty}(W\setminus\{0\})$, restricts to a bijection 
    \begin{equation*}
        \cdot|_{W\setminus\{0\}}: O^a_\alpha \to \overset{\circ}{O^a_\alpha}.
    \end{equation*}
    Moreover if we denote $o_a$ its inverse and if we let $b>a$ the following diagram commutes.
    \begin{equation*}
        \begin{tikzcd}
            O^b_\alpha \arrow[r, hook] &  O^a_\alpha &  E^a_\alpha \arrow[l, hook'] \\
            \overset{\circ}{O^b_\alpha} \arrow[u, "o_b"] \arrow[r, hook] &  \overset{\circ}{O^a_\alpha}  \arrow[u, "o_a"] &  \overset{\circ}{E^a_\alpha} \arrow[l, hook']  \arrow[u, "e_a"] 
        \end{tikzcd}
    \end{equation*}
    A distribution in $O^a_\alpha$ or $\overset{\circ}{O^a_\alpha}$ for $a>0$ will be called "locally integrable around $V$" or simply "locally integrable".
\end{theorem}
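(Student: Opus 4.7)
The approach mirrors the proof of the previous theorem, adapting Meyer's dyadic construction to the one-sided weak-homogeneity estimate that defines $O^a_\alpha$. I would treat in turn injectivity, the explicit construction of an extension $u$, the verification that $u \in O^a_\alpha$, and finally the commutative diagram.

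For injectivity, if $u_1, u_2 \in O^a_\alpha$ agree on $W \setminus \{0\}$ then $v := u_1 - u_2$ is supported on $V$, and locally, in a bundle chart, $v = \sum_{|\beta| \leq N} v_\beta \otimes \partial_w^\beta \delta_{w=0}$ with $v_\beta \in C^{-\infty}$ on an open set of $V$. Choosing $\varphi(p,w) = \psi(p)\, w^{\beta_0}\rho(w)/\beta_0!$ for a fibrewise cutoff $\rho$ isolates a single multi-index $\beta_0$, and one computes $\langle v, \alpha_{s^{-1}}^*\varphi\rangle = s^{-|\beta_0|}\langle v_{\beta_0}, \psi\rangle$. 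The $O^a$ bound then forces $|\langle v_{\beta_0}, \psi\rangle| \leq s^{a + |\beta_0|} C(\varphi)$ for every $s \in (0,1]$, which, letting $s \to 0^+$, gives $v_{\beta_0} = 0$; varying $\beta_0$ and $\psi$ yields $v = 0$.

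For surjectivity, fix a smooth $\chi$ on $W$ equal to $1$ near $V$ and fibrewise compactly supported, and set $\psi := \chi - \alpha_2^*\chi$ and $\psi_k := \alpha_{2^k}^*\psi$. Then $\psi$ is supported in a fixed annular shell bounded away from $V$ and $\chi = \sum_{k \geq 0} \psi_k$ pointwise on $W \setminus V$. Given $u_0 \in \overset{\circ}{O^a_\alpha}$, I define
\[
\langle u, \varphi\rangle := \langle u_0, (1-\chi)\varphi\rangle + \sum_{k \geq 0} \langle u_0, \psi_k \varphi\rangle,\qquad \varphi \in C^\infty_c(W).
\]
The identity $\psi_k\varphi = \alpha_{2^k}^*\bigl(\psi \cdot \alpha_{2^{-k}}^*\varphi\bigr)$ together with weak homogeneity at $s = 2^{-k} \leq 1$ gives $|\langle u_0, \psi_k\varphi\rangle| \leq 2^{-ka}\, C(\psi \cdot \alpha_{2^{-k}}^*\varphi)$. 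Since fibre derivatives of $\alpha_{2^{-k}}^*\varphi$ pick up harmless factors $2^{-k|\gamma|} \leq 1$, the $C^N$-seminorms of $\psi \cdot \alpha_{2^{-k}}^*\varphi$ are uniformly bounded in $k \geq 0$ by those of $\varphi$, and a standard closed-graph argument upgrades the pointwise $O^a$ estimate into a continuous seminorm bound. Because $a > 0$ the series $\sum 2^{-ka}$ converges, proving both convergence of the sum and continuity of $u$; for $\varphi$ supported in $W \setminus V$ only finitely many $\psi_k\varphi$ are nonzero and $\sum \psi_k = \chi$ there, so $u$ does restrict to $u_0$.

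Membership $u \in O^a_\alpha$ is obtained by substituting $\alpha_{s^{-1}}^*\varphi$ and using $\psi_k\alpha_{s^{-1}}^*\varphi = \alpha_{2^k}^*\bigl(\psi \cdot \alpha_{(s\,2^k)^{-1}}^*\varphi\bigr)$; the annular support of $\psi$ kills terms for which $s\,2^k$ is too small, so the sum collapses to indices $k \gtrsim -\log_2 s$, and a geometric tail bound $\sum 2^{-ka} \lesssim s^a$ together with an analogous treatment of the $(1-\chi)$ contribution gives $|\langle u, \alpha_{s^{-1}}^*\varphi\rangle| \leq s^a C(\varphi)$. The commutative diagram is then forced by injectivity: for $u_0 \in \overset{\circ}{E^a_\alpha} \subset \overset{\circ}{O^a_\alpha}$ the distribution $e_a(u_0) \in E^a_\alpha \subset O^a_\alpha$ restricts to $u_0$ and hence equals $o_a(u_0)$, and likewise $o_b(u_0) = o_a(u_0)$ whenever $u_0 \in \overset{\circ}{O^b_\alpha} \subset \overset{\circ}{O^a_\alpha}$ (using the inclusions in \Cref{prop some properties weak hom}). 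The main obstacle I expect is the seminorm bookkeeping in this step: making precise the uniform control of $C(\psi \cdot \alpha_{(s\,2^k)^{-1}}^*\varphi)$ across the full range of $t = s\,2^k$ that actually contributes, so that the dyadic summation really yields a constant multiple of $s^a$ with the correct continuous dependence on $\varphi$.
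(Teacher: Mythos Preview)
Your proposal is correct and follows precisely the route the paper indicates: the paper does not spell out a proof but simply refers to Meyer's Theorem~2.1, and what you have written is exactly the adaptation of Meyer's dyadic (Littlewood--Paley) construction to the vector-bundle setting, together with the uniqueness-by-support argument and the observation that the commutative diagram is forced by injectivity. One small imprecision: in the injectivity step, the test function $\psi(p)\,w^{\beta_0}\rho(w)/\beta_0!$ does not literally isolate a single $\beta_0$ unless you proceed by downward induction on $|\beta_0|$ (or choose $\rho$ flat enough at $0$), but this is routine and does not affect the argument.
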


\begin{remark}
    By definition, a function $f\in C^{\infty}(W\setminus\{0\})$ satisfies the Riemann integrability criterion around $V\times\{0\}$ iff in $f\omega\in \overset{\circ}{O^a}$ for some $a>0$ and $\omega \in \Omega^1(W)$, and in this case $f$ is then locally integrable. The injection $\operatorname{L^1_{loc}}(W)\cdot|\Omega^1|(W)\hookrightarrow  C^{-\infty}(W)$ shows us that the above theorem is a distributional generalization of the Riemann criterion. Hence the name of the previous theorem.
\end{remark}

Now let us introduce a few useful functional spaces with some necessary structure results.
\begin{definition}[Smooth $\pi$-properly supported functions \cite{lescure2015convolution} section 2.]\label{def smooth pi-properly supported functions}\ \\
    Let us denote $ C^{\infty}_{\operatorname{fc}}(W)$ the set of functions $\{f\in C^{\infty}(W),\ \pi:\operatorname{supp}(f)\to V \text{ is proper }\}$. Its dual is the set 
    \begin{equation*}
         C^{-\infty}_{\operatorname{bc}}(W)=\{u\in C^{-\infty}(W),\ \pi(\operatorname{supp}(u))\text{ is compact.}\}.
    \end{equation*}
    Conversely denote $ C^{\infty}_{\operatorname{bc}}(W)$ the set of functions $\{f\in C^{\infty}(W),\ \pi(\operatorname{supp}(f))\text{ is compact.}\}$. Its dual is the set 
    \begin{equation*}
         C^{-\infty}_{\operatorname{fc}}(W)=\{u\in C^{-\infty}(W),\ \pi:\operatorname{supp}(u)\to V \text{ is proper }\}.
    \end{equation*}
\end{definition}

We will denote by $S(W)$ the algebra of Schwartz functions on $W$. A definition is given in \cite{debord2013adiabatic} (Definition 1.1). Notice that the base needs not to be compact for $S(W)$ to be defined.

\begin{definition}[$\pi$-compactly supported tempered distributions]\ \\
    The space $S'(W)$ of $\pi$-compactly supported tempered distributions is the dual of the space $S(W)$ of Schwartz functions on $W$. 
\end{definition}
As indicated by the name, such distributions are $\pi$-compactly supported in the sense of \Cref{def smooth pi-properly supported functions} since $ C^{\infty}_{\operatorname{fc}}(W)\subset S(W)$ is a topological embedding.
\begin{prop}\label{prop weak hom implies tempered}
    Let $u\in  C^{-\infty}_{\operatorname{bc}}(W)$. Let $a\in\mc$ then one can prove that
    \begin{equation*}
        u\in  E^a \implies u\in S'(W).
    \end{equation*}
\end{prop}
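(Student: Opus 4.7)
The plan is to extend $u$ from $C^{\infty}_c(W)$ to the Schwartz space $S(W)$ by a dyadic decomposition in the fibers, combining the $E^a_\alpha$ growth estimate with the rapid fiberwise decay of Schwartz functions. First I would use $u \in C^{-\infty}_{\operatorname{bc}}(W)$ to fix $\chi_V \in C^{\infty}_c(V)$ equal to $1$ near $\pi(\operatorname{supp}(u))$, so that replacing $\psi \in S(W)$ by $(\pi^*\chi_V)\psi$ leaves the pairing unchanged whenever defined and reduces the problem to functions with compact base support. Next, by Banach--Steinhaus applied to the family $\{s^{-a}\alpha_s^*u\}_{s>0}$ on the Fr\'echet space $C^{\infty}_K$ (for any compact $K \subset W$), the pointwise $E^a_\alpha$ bound upgrades to an equicontinuous one: there are $N_K \in \mn$ and $C_K > 0$ with
\begin{equation*}
    |\dual{u}{\alpha_{s^{-1}}^*\varphi}| \leq s^a\, C_K \|\varphi\|_{C^{N_K}} \quad \forall s > 0,\ \forall \varphi \in C^{\infty}_K.
\end{equation*}

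Second, I would set up the dyadic decomposition. Fix $\widetilde\chi \in C^{\infty}(W)$ which vanishes for $|w|_g \leq 1$ and equals $1$ for $|w|_g \geq 2$, and pick $\eta \in C^{\infty}_c((0,\infty))$ supported in $[1/2,2]$ with $\sum_{k \geq 0}\eta(2^{-k}t) = 1$ for $t \geq 1$. For $\psi \in S(W)$ with compact base support, split $\psi = (1-\widetilde\chi)\psi + \widetilde\chi\psi$; the first piece already lies in $C^{\infty}_c(W)$, and the second expands as $\widetilde\chi\psi = \sum_{k \geq 0}\alpha_{2^{-k}}^*\psi_k$ with
\begin{equation*}
    \psi_k(x) := \eta(|x|_g)\,\widetilde\chi(\alpha_{2^k}x)\,\psi(\alpha_{2^k}x),
\end{equation*}
each $\psi_k$ being supported in a common compact annulus $K$ (since fiber dilation preserves the base projection, which stays inside $\operatorname{supp}(\chi_V)$).

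Third, I would combine the two ingredients. The equicontinuous bound gives $|\dual{u}{\alpha_{2^{-k}}^*\psi_k}| \leq 2^{ka}\,C_K\|\psi_k\|_{C^{N_K}}$, while the chain rule produces at most $2^{kN_K}$ from derivatives landing on $\psi(\alpha_{2^k}\cdot)$, and the Schwartz property of $\psi$ controls each such derivative by $C_M(1+|\alpha_{2^k}x|_g)^{-M} \leq C'_M\, 2^{-kM}$ for arbitrary $M$, uniformly over $K$. Hence $\|\psi_k\|_{C^{N_K}} \leq C''_M\, 2^{-k(M - N_K)}$, so taking $M$ large enough makes $\sum_k 2^{ka}\|\psi_k\|_{C^{N_K}}$ absolutely convergent and dominated by a fixed Schwartz seminorm of $\psi$. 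Setting $\dual{u}{\psi} := \dual{u}{(1-\widetilde\chi)\psi} + \sum_k\dual{u}{\alpha_{2^{-k}}^*\psi_k}$ then defines a continuous extension of $u$ to $S(W)$, which places $u$ in $S'(W)$.

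The step I expect to be the main obstacle is checking that this extension is well defined, namely independent of the choices of $\chi_V$, $\widetilde\chi$ and $\eta$, and compatible with the original pairing on $C^{\infty}_c(W)$. Compatibility on test functions is immediate because only finitely many terms of the dyadic sum are nonzero there; independence of choices reduces to comparing two decompositions of the same $\psi$, whose difference is a telescoping sum of compactly supported functions on which $u$ is already defined, and the absolute convergence above justifies interchanging the sum with $u$.
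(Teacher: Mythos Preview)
The paper states this proposition without proof (the phrase ``one can prove that'' is all it offers), so there is no argument to compare against. Your dyadic approach is correct and is a standard way to establish temperedness from a scaling bound: the Banach--Steinhaus step turning the pointwise $E^a_\alpha$ estimate into an equicontinuous one on each $C^\infty_K$ is the key observation, and from there the Littlewood--Paley style decomposition along the fibers goes through exactly as you describe. A small simplification for the well-definedness step: once your estimates show $|\dual{u}{\varphi}| \leq C\, p(\varphi)$ for a fixed Schwartz seminorm $p$ and all $\varphi \in C^\infty_c(W)$ with base support in $\operatorname{supp}(\chi_V)$, density of $C^\infty_c(W)$ in $S(W)$ gives a unique continuous extension automatically, so you need not compare different choices of $\widetilde\chi$ and $\eta$ by hand.

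Two minor points worth noting. First, the paper writes $a\in\mc$ in the proposition but $E^a_\alpha$ is only defined for real $a$ in \Cref{def E weakly homogeneous W}; your argument is for real $a$, which is the correct reading. Second, in bounding $\|\psi_k\|_{C^{N_K}}$ you should observe that for $k\geq 2$ the factor $\widetilde\chi(\alpha_{2^k}x)$ is identically $1$ on the annulus $K$, so no powers of $2^k$ arise from differentiating it; this is implicit in your estimate but deserves a sentence.
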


As suggested by \Cref{prop some properties weak hom} vertical differential operators will appear in key calculations in the study of weak homogeneity.
\begin{definition}[Vertical homogeneous differential operator]\label{def homogeneous differential operator}\ \\
    Let us denote by $\operatorname{Diff_\pi}(W)$ the set of vertical differential operators. A vertical differential operator $P\in \operatorname{Diff_\pi}(W)$ is said to be homogeneous if:
    \begin{equation*}
        \exists a\in\mn,\ \forall f\in C^{\infty}(W),\ \forall s>0,\ P(\alpha_s^* f)= s^a \alpha_s^* P(f).
    \end{equation*}
    $a$ is then called the homogeneity degree of $P$. Let us denote $\operatorname{HomDiff}_\pi^a(W)$ the such operators and denote $\operatorname{HomDiff}_\pi(W):=\displaystyle\bigcup_{a\in\mn} \operatorname{HomDiff}_\pi^a(W)$ the set of all homogeneous differential operators.
\end{definition}

Now let us summon a useful characterization of tempered distributions.
\begin{lemma}[Schwartz 1966, \cite{schwartz1966théorie} Théorème VI p.239]\label{Schwartz tempered distrib lemma}
    Let $u\in C^{-\infty}(\mr^n)$, then 
    \begin{equation*}
        u\in \cals'(\mr^n) \iff \left(\exists \alpha\in\mn^n,\ \exists k\in\mn,\ \exists f\in C^0_b(\mr^n),\ u=\partial^\alpha((1+|x|^2)^k f)\right).
    \end{equation*}
\end{lemma}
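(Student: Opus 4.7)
The ``$\Leftarrow$'' direction is a routine integration by parts: if $u=\partial^\alpha((1+|x|^2)^k f)$ with $f$ bounded continuous, then for $\varphi\in\cals(\mr^n)$,
\begin{equation*}
\dual{u}{\varphi}=(-1)^{|\alpha|}\int f(x)(1+|x|^2)^k \partial^\alpha\varphi(x)\,dx,
\end{equation*}
which is controlled by $\|f\|_\infty$ times a Schwartz seminorm, so $u\in\cals'(\mr^n)$.

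For ``$\Rightarrow$'', the plan is in three steps. First, I would unpack the continuity of $u\in\cals'(\mr^n)$: there exist $N,k_0\in\mn$ and $C>0$ with
\begin{equation*}
\forall \varphi\in\cals(\mr^n),\ |\dual{u}{\varphi}|\leq C\sum_{|\beta|\leq N}\|(1+|x|^2)^{k_0}\partial^\beta\varphi\|_\infty.
\end{equation*}
Second, I would convert these $L^\infty$-seminorms into $L^1$-norms by means of the fundamental theorem of calculus applied coordinate-by-coordinate: for any $\psi\in\cals(\mr^n)$,
\begin{equation*}
\|\psi\|_\infty \leq \|\partial_1\cdots\partial_n\psi\|_{L^1}.
\end{equation*}
Taking $\psi=(1+|x|^2)^{k_0}\partial^\beta\varphi$ and expanding by Leibniz, one arrives at a single estimate of the form
\begin{equation*}
|\dual{u}{\varphi}|\leq C'\,\|(1+|x|^2)^{k_1}\partial^\gamma\varphi\|_{L^1}
\end{equation*}
for some $\gamma\in\mn^n$ and $k_1\in\mn$.

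Third, I would invoke Hahn--Banach together with the duality $(L^1)^\ast=L^\infty$. The previous estimate says that the functional $\varphi\mapsto\dual{u}{\varphi}$ factors through the continuous injection $T:\cals(\mr^n)\to L^1(\mr^n)$, $\varphi\mapsto(1+|x|^2)^{k_1}\partial^\gamma\varphi$, with a continuous lift on its image. Extending the lift to all of $L^1$ and representing it by an $L^\infty$ density produces $g\in L^\infty(\mr^n)$ with
\begin{equation*}
\dual{u}{\varphi}=\int g(x)(1+|x|^2)^{k_1}\partial^\gamma\varphi(x)\,dx,
\end{equation*}
so $u=(-1)^{|\gamma|}\partial^\gamma\bigl((1+|x|^2)^{k_1}g\bigr)$ in $\cald'(\mr^n)$. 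To finally replace $g\in L^\infty$ by a bounded continuous function, I would integrate once more: set $G(x):=\int_0^{x_1}\!\cdots\!\int_0^{x_n} g(t)\,dt$, which is continuous with polynomial growth $|G(x)|\leq\|g\|_\infty(1+|x|^2)^{n/2}$, and satisfies $\partial_1\cdots\partial_n G=g$. Commuting the resulting $\partial^{\gamma+(1,\ldots,1)}$ past the weight $(1+|x|^2)^{k_1}$ via Leibniz and absorbing the polynomial growth of $G$ into a higher power of $(1+|x|^2)$ yields the desired normal form $u=\partial^\alpha((1+|x|^2)^k f)$ with $f\in C^0_b(\mr^n)$.

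The hardest step is really this last one: the straightforward Hahn--Banach argument only delivers an $L^\infty$ density, and each naive integration to gain continuity both introduces polynomial growth and requires commuting derivatives past the weight $(1+|x|^2)^{k_1}$, which spawns lower-order error terms that are not a priori continuous. Carefully bookkeeping multi-indices, weight exponents, and possibly iterating the integration to eliminate those residual terms is the technical heart of the argument; everything upstream (the seminorm estimate, the FTC-based $L^\infty\to L^1$ conversion, and the Hahn--Banach representation) is essentially mechanical.
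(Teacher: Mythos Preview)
The paper does not prove this lemma at all: it is quoted verbatim as a classical result from Schwartz's book (Th\'eor\`eme~VI, p.~239) and used as a black box in the proof of the next lemma. There is therefore no in-paper argument to compare your sketch against.

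Your outline follows one of the standard routes to this structure theorem, and the first three steps (continuity bound, $L^\infty\to L^1$ via the fundamental theorem of calculus, Hahn--Banach with $(L^1)^*=L^\infty$) are sound. The difficulty you isolate in the last step is real as you have set it up, but it is largely self-inflicted: after Hahn--Banach you hold $u=(-1)^{|\gamma|}\partial^\gamma\bigl((1+|x|^2)^{k_1}g\bigr)$ with $g\in L^\infty$, and you then integrate $g$ and try to commute $\partial_1\cdots\partial_n$ past the weight, which indeed produces non-continuous partial antiderivatives of $g$. A cleaner fix is to integrate the whole product $h:=(1+|x|^2)^{k_1}g$ instead. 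Since $h$ is locally integrable with polynomial growth, $H(x):=\int_0^{x_1}\!\cdots\!\int_0^{x_n}h(t)\,dt$ is continuous with polynomial growth and satisfies $\partial_1\cdots\partial_n H=h$ in $\cald'(\mr^n)$; hence $u=(-1)^{|\gamma|}\partial^{\gamma+(1,\dots,1)}H$, and writing $H=(1+|x|^2)^{k}f$ with $k\geq k_1+\lceil n/2\rceil$ gives $f\in C^0_b(\mr^n)$ directly, with no commutation and no residual terms to iterate away.
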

This lemma can be adapted to vector bundles to obtain the following.

\begin{lemma}[Tempered distribution a a vector bundle]\label{Schwartz tempered distrib lemma vector bundle}\ \\
    Let $u\in S'(W)$, then 
    \begin{equation*}
        \exists A\in\operatorname{HomDiff}_\pi(W),\ \exists k\in\mn,\ \exists \mu \in E_\alpha^{n}\cap C^{-\infty}_{\operatorname{bc}}(W),\ u=A((1+|X|^2)^k \mu).
    \end{equation*}
\end{lemma}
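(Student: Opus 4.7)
The plan is to adapt Schwartz's structure theorem (\Cref{Schwartz tempered distrib lemma}) to the bundle $\pi\colon W \to V$ via local trivialization, and then to exploit the $\pi$-compact support of $u$ with cutoffs and integration by parts to absorb base-direction derivatives into the distribution $\mu$, leaving only vertical (fiber) differential operators in the outer operator $A$.

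Since $K := \pi(\operatorname{supp}(u))$ is compact, I would cover it by finitely many trivializing open sets $V_1, \ldots, V_N \subset V$ and pick a partition of unity $(\chi_i)_i$ on a neighborhood of $K$ subordinate to this cover. It suffices to treat each piece $u_i := (\chi_i\circ\pi)u$ separately. In a trivialization $\pi^{-1}(V_i) \cong V_i\times\mr^n$, with $V_i$ viewed as an open subset of $\mr^d$, extend $u_i$ by zero to $\mr^{d+n}$ and apply \Cref{Schwartz tempered distrib lemma} to write
\begin{equation*}
u_i = \partial_x^{\alpha_{1,i}}\partial_\xi^{\alpha_{2,i}}\bigl((1+|x|^2+|\xi|^2)^{k_i} F_i\bigr)
\end{equation*}
with $F_i$ continuous and bounded. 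Pick $\tilde\chi_i \in C^\infty_c(V_i)$ equal to $1$ on a neighborhood of $\operatorname{supp}(\chi_i)$, so that $u_i = (\tilde\chi_i\circ\pi)\,u_i$. Iterated Leibniz, together with the commutation of $\tilde\chi_i$ and $\partial_\xi^{\alpha_{2,i}}$, then gives
\begin{equation*}
u_i = \partial_\xi^{\alpha_{2,i}}\Bigl[(1+|\xi|^2)^{k_i} \sum_{\beta\leq\alpha_{1,i}} c_\beta\, \partial_x^{\alpha_{1,i}-\beta}\bigl[h_{i,\beta}\bigr]\Bigr],
\end{equation*}
where $h_{i,\beta} := (\partial_x^\beta\tilde\chi_i)\,(1+|x|^2+|\xi|^2)^{k_i}F_i \big/ (1+|\xi|^2)^{k_i}$ is continuous, bounded (the quotient is smooth and uniformly bounded on the compact $x$-support of $\partial_x^\beta\tilde\chi_i$), and $\pi$-compactly supported. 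Setting $\mu_i := \sum_\beta c_\beta\, \partial_x^{\alpha_{1,i}-\beta}h_{i,\beta}$, a short change of variables $\xi \mapsto s\eta$ shows that $|\langle \mu_i,\alpha_{s^{-1}}^*\varphi\rangle|\leq s^n C(\varphi)$ for every $s>0$, whence $\mu_i \in E^n_\alpha\cap C^{-\infty}_{\operatorname{bc}}(W)$.

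Defining the global vertical operator $\tilde A_i := (\tilde\chi_i\circ\pi)\cdot\partial_{\xi^{(i)}}^{\alpha_{2,i}}$ (the base cutoff has dilation degree zero, so $\tilde A_i \in \operatorname{HomDiff}_\pi^{|\alpha_{2,i}|}(W)$), one obtains $u_i = \tilde A_i\bigl((1+|X|^2)^{k_i}\mu_i\bigr)$ on all of $W$, and summing yields
\begin{equation*}
u = \sum_{i=1}^N \tilde A_i\bigl((1+|X|^2)^{k_i}\mu_i\bigr).
\end{equation*}
To collapse this into a single term $A\bigl((1+|X|^2)^k\mu\bigr)$ with one homogeneous operator $A$, I would uniformize the multi-indices $\alpha_{2,i}$ and the exponents $k_i$ by antidifferentiating in the fiber directions, an operation that preserves $\pi$-compact support and continuity of the underlying bounded functions and merely raises $k$. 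One then sets $\mu := \sum_i \mu_i'$ where $\mu_i'$ is the resulting uniformized distribution.

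The principal technical obstacle is precisely this last uniformization step: producing a single global vertical homogeneous operator $A \in \operatorname{HomDiff}_\pi(W)$ while checking that fiber-direction antidifferentiations preserve membership in $E^n_\alpha\cap C^{-\infty}_{\operatorname{bc}}(W)$. The weak-homogeneity check requires carefully tracking the dilation action through each cutoff multiplication and each antidifferentiation; it is routine but bookkeeping-heavy.
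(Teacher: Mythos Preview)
Your proposal is correct and follows essentially the same path as the paper: localize via a partition of unity on the $\pi$-compact base support, apply the classical Schwartz structure theorem in each chart, factor $(1+|X|^2)^k$ out of $(1+|x|^2)^k$, absorb the base cutoff and the base-direction derivatives into $\mu$ (which then lies in $E^n_\alpha\cap C^{-\infty}_{\operatorname{bc}}$), and finally uniformize across charts by vertical antidifferentiation. The paper's proof is marginally more streamlined in that it simply commutes the base cutoff past the fiber derivatives (no Leibniz expansion required there) and leaves the final gluing step implicit, but the substance is identical and you correctly flag the one genuinely fiddly point.
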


\begin{proof}
    This proof will be conducted in local charts. Denote by $W\to V$ the vector bundle $\mr^{m'}\times\mr^n \to  \mr^{m'}$, and take $u\in S'(W)$. Denote by $\cals(\mr^{m'+n})$ the set of usual Schwartz functions. Since $\cals(\mr^{m'+n})\subset S(W)$ is a topological embedding, one has $S'(W)\subset \cals'(\mr^{m'+n})$. Thus we can apply \Cref{Schwartz tempered distrib lemma} to $u$, that is:
    \begin{equation*}
        \exists \beta\in\mn^{m'+n},\ \exists k\in\mn,\ \exists f\in C^0_b(\mr^{m'+n}),\ u=\partial^\beta((1+|x|^2)^k f).
    \end{equation*}
    Now let us write 
    \begin{equation*}
        u=\partial_n^{\beta_n}\partial_{m'}^{\beta_{m'}}((1+|x|^2)^k f).
    \end{equation*}
    Since $u\in C^{-\infty}_{\operatorname{bc}}(W)$, we have that $\operatorname{supp}(u)\subset B_{\mr^{m'}}(0,R)\times\mr^n$ for some $R>0$. Now denote $X$ the projection of $x\in\mr^{m'+n}$ on $\mr^n\times\{0\}$, take a cutoff function $\chi\in C^{\infty}_c(\mr^{m'})$ around $B_{\mr^{m'}}(0,R)$, and denote $\mathcal{X}=\pi^* \chi\in C^{\infty}_{\operatorname{bc}}(W)$, we have
    \begin{align*}
        u & =\mathcal{X} \cdot \partial_n^{\beta_n}\partial_{m'}^{\beta_{m'}}((1+|X|^2)^k \left( \frac{1+|x|^2}{1+|X|^2}\right)^k f)\\
        & = \partial_n^{\beta_n}((1+|X|^2)^k \mathcal{X}\partial_{m'}^{\beta_{m'}}(\left( \frac{1+|x|^2}{1+|X|^2}\right)^k f)).
    \end{align*}
    A simple integration by part shows that the distribution 
    \begin{equation*}
        \mu=\mathcal{X}\partial_{m'}^{\beta_{m'}}(\left( \frac{1+|x|^2}{1+|X|^2}\right)^k f) dx_{\mr^{m'+n}}
    \end{equation*} 
    is a sum of horizontal derivatives of functions in $C^0_b(\mr^{m'+n})\cap C^0_{\operatorname{bc}}(W)$. The \Cref{prop some properties weak hom} gives us that 
    $\mu \in E_\alpha^{n}\cap C^{-\infty}_{\operatorname{bc}}(W)$. Also in the expression 
    \begin{align*}
        u & = \partial_n^{\beta_n}((1+|X|^2)^k \mu)
    \end{align*}
    one can arbitrarily increase $\beta_n$ by integrating $(1+|X|^2)^k \mu $ vertically, thus increasing $k$ and keeping the same assumptions on $\mu$. Notice now that $\partial_n^{\beta_n}$ is a homogeneous of degree $|\beta_n|$ for the $\alpha$ action in the sense of \Cref{def homogeneous differential operator}. Now recall that $u$ has a $\pi$-compact support, the previous reasoning can be made in a finite number of local trivializations of $W$ to obtain the global result.
\end{proof}

\begin{remark}\label{remark epsilon decomposition of Schwartz functions}
    The previous result, thanks to \Cref{prop some properties weak hom}, tells us that $u\in S'(W)$ can be written a sum of weakly homogeneous distribution of integer order, namely
    \begin{equation*}
        u=A((1+|X|^2)^k \mu)=\sum_{j=1}^k \begin{pmatrix} k\\ j\end{pmatrix} A(|X|^{2j}\mu).
    \end{equation*}
    One can modify this result to obtain a decomposition in a sum of weakly homogeneous distributions of non-integer order. Recall the notations in the above proof above, in local charts we had  
    \begin{equation*}
        \mu=\mathcal{X}\partial_{m'}^{\beta_{m'}}(\left( \frac{1+|x|^2}{1+|X|^2}\right)^k f) dx_{\mr^{m'+n}}
    \end{equation*} 
    with $f\in C^0_b(\mr^{m'+n})$. Take $0<\varepsilon<1$.
    Assume $f$ to be a positive function, then define
    \begin{equation*}
        f_\varepsilon:=\min(|X|^\varepsilon,f),\ f_{-\varepsilon}:=\max(f-|X|^\varepsilon,0).
    \end{equation*}
    One easily sees that $f=f_\varepsilon+f_{-\varepsilon}$. Moreover one can prove that $f_\varepsilon dx  \in E_\alpha^{n+\varepsilon}$ and conversely $f_{-\varepsilon} dx \in E_\alpha^{n-\varepsilon}$. If $f$ is complex-valued on can apply this process to the positive and negative parts of the real and imaginary part of $f$ and still find such complex-valued $f_\varepsilon$ and $f_{-\varepsilon}$ in $C^0_b(\mr^{m'+n})$. Then one can write 
    \begin{equation*}
        \mu_\varepsilon=\mathcal{X}\partial_{m'}^{\beta_{m'}}(\left( \frac{1+|x|^2}{1+|X|^2}\right)^k f_\varepsilon) dx_{\mr^{m'+n}}
    \end{equation*}
    and 
    \begin{equation*}
        \mu_{-\varepsilon}=\mathcal{X}\partial_{m'}^{\beta_{m'}}(\left( \frac{1+|x|^2}{1+|X|^2}\right)^k f_{-\varepsilon}) dx_{\mr^{m'+n}}
    \end{equation*}
    and obtain $\mu=\mu_\varepsilon+\mu_{-\varepsilon}$. The \Cref{prop some properties weak hom} tells us that $\mu_\varepsilon \in E_\alpha^{n+\varepsilon}\cap C^{-\infty}_{\operatorname{bc}}(W)$ and $\mu_{-\varepsilon} \in E_\alpha^{n-\varepsilon}\cap C^{-\infty}_{\operatorname{bc}}(W)$. Hence 
    \begin{align*}
        u & = \partial_n^{\beta_n}((1+|X|^2)^k (\mu_\varepsilon+\mu_{-\varepsilon}))
    \end{align*}
    can be written as a sum of non-integer degree weakly homogeneous functions.
\end{remark}

\begin{remark}\label{remark weak homogeneity and temperness}
    What is most interesting in \Cref{Schwartz tempered distrib lemma} and our adaptation in the case of a fiber bundle is that given \Cref{prop weak hom implies tempered} we obtain respectively:
    \begin{equation*}
        \bigoplus_{a\in\mr} E^a(\mr^n)=\cals'(\mr^n) \text{ and }  C^{-\infty}_{\operatorname{bc}}(W)\cap \bigoplus_{a\in\mr} E^a_\alpha(W)=S'(W).
    \end{equation*}
\end{remark}

\section{The setting}\label{section 4}

Let $M$ be an $m$-dimensional smooth manifold let $V$ be an $m'$-dimensional smooth embedded submanifold of $M$. Denote $n:=m-m'$. We recall the differential description of the  $\dnc$ (Deformation to the Normal Cone) construction.

\begin{definition}[Deformation to the Normal Cone \cite{FultonMacPherson}]\label{def dnc}\ \\
    Denote $\caln_V^M$ the normal bundle to $V$ in $M$ ($\caln_V^M=TM|_V/TV\rightarrow V$). We call "Deformation to the normal Cone" to $V$ in $M$ the following set:
    \begin{equation*}
        \dnc(M,V)=M\times\mr^* \sqcup \caln_V^M \times \{0\} .
    \end{equation*}
\end{definition}

This set is endowed it with a canonical manifold structure making $\caln_V^M\times\{0\}$ and $M\times \mr^*$ into respectively closed and open submanifolds (\cite{kashiwara2002sheaves}, section 4.1).

However in practice one will work in a particular set of smooth charts and local coordinates: the exponential charts. We recall an explicit description of these charts.
Equip $M$ with a riemannian structure $(M,g)$. For every $x\in V$ the Levi-Civita connection $\nabla$ defines an exponential map 
\begin{equation*}
    \exp_x: B(x,r_x)\subset T_x M \xrightarrow{\sim} \exp_x(B(x,r_x))\subset M
\end{equation*} where $r_x>0$ is the radius of injectivity at $x$. Using that, we can find an open neighborhood $U$ of $V$ in $\caln_V^M=TV^{\perp_g}$ such that:
\begin{equation*}
    \operatorname{Exp}:= \begin{bmatrix}
        U\subset \caln_V^M & \xrightarrow{\sim} & \operatorname{Exp}(\mathcal{U})\subset M\\
        (x,X) & \mapsto & exp_x(X)
    \end{bmatrix}
\end{equation*}
is a diffeomorphism. We chose $U=\{(x,X)\in\caln_V^M,\ |X|<r_x\} $ where $|\cdot|$ denotes the metric induced by $g$ on $\caln_V^M=TV^{\perp_g}$.

\begin{definition}[Exponential Charts (\cite{HilsumSkandalis1987}, statement 3.1)]\label{def exponential charts}\ \\
    From the previous map we can define a bijection 
    \begin{equation}
        \Exp:= \begin{bmatrix}
            \mathcal{U}\subset \caln_V^M\times \mr & \rightarrow &  \Exp(\mathcal{U})\subset \dnc(M,V)\\
            (x,X,0) & \mapsto & (x,X,0)\\
            (x,X,t) & \mapsto & (exp_x(tX),t)
        \end{bmatrix}
    \end{equation}
    where $\mathcal{U}=\{(x,X,t)\in\caln_V^M,\ (x,tX)\in U\} $ and thus $  \Exp(\mathcal{U}) = \mr^*\times U \sqcup \{0\}\times \caln_V^M$. 
\end{definition}
The smooth structure on $\dnc(M,V)$ (\cite{kashiwara2002sheaves}, section 4.1) is the only one such that:
\begin{equation*}
    \begin{cases}
        \Exp: \mathcal{U}\rightarrow \dnc(M,V)\\
        \iota: M\times\mr^* \rightarrow \dnc(M,V)
    \end{cases}\text{ are embeddings.}
\end{equation*}

\begin{definition}[Zoom action \cite{debord2013adiabatic}]\ \\
    We have a Lie group action of $\mr^*_+$ on the manifold $\dnc(M,V)$ given by:
    \begin{equation}
        \forall s >0,\ \alpha_s=\begin{bmatrix}
            \dnc(M,V) & \rightarrow & \dnc(M,V)\\
            (x,X,0) & \mapsto & (x,sX,0)\\
            (x,t\neq 0) & \mapsto & (x,s^{-1}t)
        \end{bmatrix}\in \operatorname{Aut}(\dnc(M,V)).
    \end{equation}
\end{definition}

\begin{definition}[Zoom action on $\caln_V^M\times\mr$ \cite{debord2013adiabatic}]\ \\
    The zoom action $\mr_+^* \overset{\widetilde\alpha}{\curvearrowright}\caln_V^M\times\mr$ is given by:
\begin{equation}
    \forall s>0,\ \widetilde \alpha_s =\begin{bmatrix}
        \caln_V^M\times\mr & \rightarrow & \caln_V^M\times\mr\\
        (x,X,t) & \mapsto & (x,sX,s^{-1}t)
    \end{bmatrix}.
\end{equation}
Notice that both the domain and the image of the exponential charts are stable under the respective zoom actions. Moreover one can verify that this zoom action is just $\alpha$ seen in the exponential charts i.e.:
\begin{equation*}
    \forall s>0,\ \widetilde \alpha_s = \Exp^{-1} \circ \alpha_s \circ \Exp.
\end{equation*}
\end{definition}

\begin{remark}
    Note that $\alpha$ induces a free, proper, action on $\dnc(M,V)\setminus V\times\mr$ and $M\times\mr_+^*$, but this action is neither proper nor free on $\dnc(M,V)$.
\end{remark}
We would like to study homogeneous distributions for the $\alpha$-action (\Cref{def homogeneous function}) on $\dnc(M,V)\setminus V\times\mr$ and their extensions on $\dnc(M,V)$. Thanks to the localisation property of distributions, and since $V\times\mr$ is included in the range of the $\Exp$ map, the study of homogeneous distributions for the $\alpha$ action boils down to the study of homogeneous distributions for the $\widetilde \alpha$ action in the domain of $\Exp$. Indeed we have:
\begin{equation*}
\begin{matrix}
    \Exp_*:  \begin{matrix}
        C^{-\infty}(\mathcal{U}) & \xrightarrow[\text{tvs iso}]{\sim} & C^{-\infty}(\Exp(\mathcal{U})) \\
        \begin{Bmatrix}
            \alpha\text{-homogeneous}\\
            \text{distributions}
        \end{Bmatrix} & \xrightarrow[]{1:1} & \begin{Bmatrix}
            \widetilde\alpha\text{-homogeneous}\\
            \text{distributions}
        \end{Bmatrix}
    \end{matrix}.
\end{matrix}
\end{equation*}
Thus in the following we will assume without loss of generality that the exponential charts to be global, that is $\mathcal{U}=\caln_V^M\times\mr$ and $\Exp(\mathcal{U})= \dnc(M,V)$.

By abuse of notation we will denote $\alpha$ the zoom action on $\caln_V^M\times\mr$. Notice that the action $\mr_+^* \overset{\alpha}{\curvearrowright}\caln_V^M\times\mr$ induces an action on each fiber $\caln_x \times\mr$ such that for any $f\in\mc^{\caln_V^M\times\mr}$, $f$ is homogeneous iff:
\begin{equation*}
    \forall x\in V,\ \forall (X,t)\in\caln_x \times\mr,\ f(x,sX,s^{-1}t)=s^af(x,X,t).
\end{equation*}
So $f$ is homogeneous iff for any $x\in V$, $f(x,\cdot)$ is homogeneous on $\caln_x \times\mr$.

\begin{lemma}[Derivations of homogeneous functions]\ \\
    Let $f$ be a smooth function on $\caln_V^M\times\mr$. Let $a\in\mc$. If $f$ is $a$-homogeneous then:
    \begin{enumerate}
        \item $\partial_t f$ is $(a+1)$-homogeneous,
        \item For any $x\in V$, for any $X\in\caln_x$ the function $X f(x,\cdot)$ is $(a-1)$-homogeneous.
    \end{enumerate}
\end{lemma}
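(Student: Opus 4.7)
The plan is to obtain both statements by differentiating the defining homogeneity identity
\begin{equation*}
f(x, sY, s^{-1}t) = s^a f(x, Y, t), \qquad s > 0,\ (x,Y,t) \in \caln_V^M\times\mr,
\end{equation*}
in the appropriate direction and reading off the resulting degree via the chain rule. No extension-theoretic machinery from the earlier sections is needed; this is a pointwise computation that happens to be the smooth analogue of \Cref{hom_distrib_dnc_charac_eq}.

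For part (1), I would apply $\partial_t$ to both sides of the identity above, viewing $s$ and $(x,Y)$ as parameters. On the left, the chain rule and $\partial_t(s^{-1}t) = s^{-1}$ yield $s^{-1}(\partial_t f)(x, sY, s^{-1}t)$, while the right side gives $s^a (\partial_t f)(x, Y, t)$. Multiplying through by $s$ produces the identity
\begin{equation*}
(\partial_t f)(x, sY, s^{-1}t) = s^{a+1} (\partial_t f)(x, Y, t),
\end{equation*}
which is exactly the $(a+1)$-homogeneity of $\partial_t f$.

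For part (2), I would first note that the statement is fiberwise: fix $x \in V$, restrict $f$ to the fiber $\caln_x \times \mr$, where the zoom action becomes the linear action $(Y,t) \mapsto (sY, s^{-1}t)$, and interpret $X \in \caln_x$ as a constant vector in this linear fiber. Then $Xf(x,\cdot)$ is the directional derivative of $f(x, \cdot, t)$ at $Y$ in the direction $X$, and by linearity of the differential, applying $X$ to the $Y$ variable on both sides of
\begin{equation*}
f(x, sY, s^{-1}t) = s^a f(x, Y, t)
\end{equation*}
gives $s\cdot (Xf)(x, sY, s^{-1}t) = s^a (Xf)(x, Y, t)$, i.e.\
\begin{equation*}
(Xf)(x, sY, s^{-1}t) = s^{a-1} (Xf)(x, Y, t),
\end{equation*}
which is the $(a-1)$-homogeneity of $Xf(x,\cdot)$.

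There is essentially no obstacle here; the only point worth emphasizing is conceptual rather than technical, namely that $X \in \caln_x$ is only a tangent vector along a single fiber, so it acts purely in the $Y$ direction and does not differentiate the base coordinate $x$. This is why the second statement is formulated fiberwise, rather than as a statement about derivations of the whole manifold $\caln_V^M \times \mr$ (where an arbitrary extension of $X$ to a vertical vector field would not in general preserve homogeneity unless it is itself homogeneous in the sense of \Cref{def homogeneous differential operator}).
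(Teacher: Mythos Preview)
Your proof is correct. It differs from the paper's in a small but genuine way: the paper first uses homogeneity to write $f(x,X,t)=t^{-a}f(x,tX,1)$ and then differentiates this explicit formula, checking that the result has the right shape (a power of $t$ times a function of $tX$), whereas you differentiate the homogeneity identity $f(x,sY,s^{-1}t)=s^a f(x,Y,t)$ itself and read off the new degree directly from the chain-rule factors $s^{-1}$ and $s$. Your route is shorter and avoids the intermediate observation that functions of $tX$ are $0$-homogeneous; the paper's route, on the other hand, produces an explicit closed form for $\partial_t f$ and $\partial_{X_i} f$ in terms of $\gamma_\pm(x,X)=f(x,X,\pm 1)$, which is exactly the representation used later in the proof of \Cref{th charac smooth hom functions}. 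So the paper's detour is not wasted: it sets up the Taylor-expansion computations that follow.
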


\begin{proof}
    Because derivations are local notions we can work as in a local trivialization of $\caln_V^M \times \mr\to V$. Fix $x$ in $V$, let $(X,t)\in\caln_x \times\mr$. Chose an arbitrary basis $(e_i)_{i=1}^n$ of $\caln_x$ and do the computations with respect to it without loss of generality.
    \begin{enumerate}
        \item Since $f$ is $a$-homogeneous we have $f(x,X,t)=t^{-a}f(x,tX,1)$. Notice how any function of $(X,t)$ of the form $(X,t)\mapsto g(tX)$ is $0$-homogeneous. Now compute $\partial_t f(x,X,t)$ using the Leibnitz rule:
        \begin{align*}
            \partial_t f(x,X,t) & = -a t^{-a-1}f(x,tX,1) +t^{-a}\displaystyle\sum_{i=1}^n X_i (\partial_{Xi} f)(x,tX,1)
        \end{align*}
        which is visibly $(a+1)$-homogeneous.
        \item Let $i\in\llbracket1,n\rrbracket$. Compute $\partial_{Xi} f(x,X,t)$ with the chain rule:
        \begin{align*}
            \partial_{Xi} f(x,X,t) & = \partial_{Xi}(t^{-a}f(x,tX,1))\\
            & = t^{-a}\partial_{Xi}(f(x,tX,1))\\
            & = t^{-a+1}(\partial_{Xi}f)(x,tX,1)
        \end{align*}
        which is visibly $(a-1)$-homogeneous.
    \end{enumerate}
\end{proof}

We will start with results on smooth functions as a guiding example.

\begin{theorem}[Characterization of smooth homogeneous functions on $\dnc(M,V)$]\label{th charac smooth hom functions}\ \\
    Let $f\in C^{\infty}(\dnc(M,V))$ be an $a$-homogeneous function, then there are three cases.
    \begin{enumerate}
    \item If $a\in\rrbracket-\infty,0\rrbracket$, $f$ is of the form:
    \begin{equation*}
        (-1)^a f_{M-}\otimes t_-^{-a} + f_{M+}\otimes t_+^{-a}:=\begin{bmatrix}
            \dnc(M,V) & \rightarrow & \mc\\
            (x,t) & \mapsto & (-1)^a f_{M-}(x)t_-^{-a} + f_{M+}(x)t_+^{-a} \\
            (x,X,0) & \mapsto & \begin{cases}
                0 & \text{ if } a<0\\
                f_{M+}(x) & \text{ if } a=0\\
            \end{cases}
        \end{bmatrix}
    \end{equation*}
    where $f_{M-}$ and $f_{M+}$ are smooth functions on $M$  such that $f_{M+}-f_{M-}$ is flat on $V$ (i.e. $f_{M-}$, $f_{M+}$ and all their derivatives of any order coincide on $V$).
    \item If $a\in\mn^*$, $f$ is of the form:
    \begin{equation*}
        (-1)^a f_{M-}\otimes t_-^{-a} + f_{M+}\otimes t_+^{-a}:=\begin{bmatrix}
            \dnc(M,V) & \rightarrow & \mc\\
            (x,t) & \mapsto & (-1)^a f_{M-}(x)t_-^{-a} + f_{M+}(x)t_+^{-a} \\
            (x,X,0) & \mapsto & \displaystyle\frac{1}{a!}(X^{(a)} f_{M+})(x)
        \end{bmatrix}
    \end{equation*}
    where $f_{M-}$ and $f_{M+}$ are smooth functions on $M$ that vanish at order $a-1$ on $V$ (i.e. $f_{M-}$, $f_{M+}$ and all their derivatives up to order $a-1$ order vanish on $V$) and such that $f_{M+}-f_{M-}$ is flat on $V$. Here $X^{(a)}\cdot$ denotes the $a$-th directional derivative along the vector $X$.
    \item If $a\in\mc\setminus\mz$, then $f$ is of the form:
    \begin{equation*}
        f_{M-}\otimes t_-^{-a} + f_{M+}\otimes t_+^{-a}:=\begin{bmatrix}
            \dnc(M,V) & \rightarrow & \mc\\
            (x,t) & \mapsto & f_{M-}(x)t_-^{-a} + f_{M+}(x)t_+^{-a} \\
            (x,X,0) & \mapsto & 0
        \end{bmatrix}
    \end{equation*}
    where $f_{M-}$ and $f_{M+}$ are smooth functions on $M$ that are flat on $V$.
    \end{enumerate}
\end{theorem}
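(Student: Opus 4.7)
The plan is to analyze $f$ stratum by stratum on $\dnc(M,V)=M\times\mr^*\sqcup\caln_V^M\times\{0\}$ and then glue the two pieces together via the exponential chart, where smoothness across $t=0$ becomes an explicit matching condition on Taylor expansions.

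First I would restrict $f$ to $M\times\mr^*$. On each of the two components $M\times\mr_+^*$ and $M\times\mr_-^*$ the zoom action is just (the inverse of) the dilation action on $\mr_\pm^*$ with trivial action on $M$. The same evaluation-at-a-fibre argument as in the proof of \Cref{th homogeneous functions and the trivial action}, applied to smooth functions (where it boils down to the change of variable $t\mapsto t/s$ at fixed $x$), gives
\[
    f(x,t)=f_{M+}(x)\,t_+^{-a}+\varepsilon_a f_{M-}(x)\,t_-^{-a},\qquad t\neq 0,
\]
for some $f_{M\pm}\in C^\infty(M)$, where $\varepsilon_a$ is the sign factor of the statement (equal to $(-1)^a$ in the integer cases and to $1$ when $a\in\mc\setminus\mz$). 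Next I would restrict $f$ to $\caln_V^M\times\{0\}$: there the zoom action is the fibrewise dilation of the vector bundle, so $f|_{\caln_V^M\times\{0\}}$ is a smooth $a$-homogeneous function on $\caln_V^M\to V$. Taylor expanding at the zero section forces this restriction to vanish unless $a\in\mn$, in which case it must be a polynomial of degree $a$ in the fibres with smooth coefficients on $V$.

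The heart of the argument is then to impose smoothness at the junction via the exponential chart. Pulling back to $\caln_V^M\times\mr$ through $\Exp$, the two formulas above combine into
\[
    \tilde f(x,X,t)=\begin{cases} t^{-a}f_{M+}(\exp_x(tX)),& t>0,\\ \varepsilon_a(-t)^{-a}f_{M-}(\exp_x(tX)),& t<0,\end{cases}
\]
which must extend smoothly across $t=0$ with the boundary value computed in the previous step. For $x\in V$, a Taylor expansion in $t$ along $t\mapsto\exp_x(tX)$ yields
\[
    f_{M\pm}(\exp_x(tX))=\sum_{j\geq 0}\frac{t^j}{j!}\,(X^{(j)}f_{M\pm})(x)+O(t^\infty),
\]
so $\tilde f$ acquires a formal asymptotic made of powers of $t$ weighted by iterated normal derivatives of $f_{M\pm}$. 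The three cases then follow. When $a\in\rrbracket-\infty,0\rrbracket$, $t^{-a}$ is a smooth monomial and smoothness across $t=0$ is exactly the matching of the two one-sided Taylor series in $t$ at every order, i.e. the flatness of $f_{M+}-f_{M-}$ on $V$. When $a\in\mn^*$, the pole $t^{-a}$ forces the first $a$ Taylor coefficients of each $f_{M\pm}(\exp_x(tX))$ to vanish, which is equivalent to $f_{M\pm}$ vanishing to order $a-1$ on $V$; matching the remaining coefficients then gives the flatness of $f_{M+}-f_{M-}$, and the common constant term identifies the restriction to $\caln_V^M\times\{0\}$ with $\tfrac{1}{a!}(X^{(a)}f_{M+})(x)$. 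When $a\in\mc\setminus\mz$, $t^{-a}$ is nowhere smooth at $0$, so the only way $\tilde f$ can extend smoothly is $f_{M\pm}(\exp_x(tX))=O(t^\infty)$, i.e. $f_{M\pm}$ flat on $V$; both one-sided limits then vanish, matching the null restriction on $\caln_V^M\times\{0\}$.

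I expect the main technical obstacle to be upgrading the one-dimensional Taylor expansions along the distinguished geodesics $t\mapsto\exp_x(tX)$ to genuine joint smoothness of $\tilde f$ in all variables $(x,X,t)$ near $t=0$: derivatives along $x$ and $X$ must also match from the two sides, not just the pure $t$-derivatives. This is precisely where the \emph{flatness} of $f_{M+}-f_{M-}$ on $V$, rather than vanishing of only normal derivatives, really pays off, since flatness means equality of all (mixed) derivatives of $f_{M+}$ and $f_{M-}$ on $V$, which is exactly what is needed to extend $\tilde f$ smoothly through $t=0$ in every direction.
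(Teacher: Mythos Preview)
Your proposal is correct and follows essentially the same route as the paper: restrict to $M\times\mr^*$ to extract $f_{M\pm}$, pull back through $\Exp$, Taylor-expand $f_{M\pm}(\exp_x(tX))$ in $t$, and read off the three cases from the matching of one-sided expansions. The only point worth noting is that the paper resolves your anticipated ``main technical obstacle'' not by tracking mixed derivatives directly but by observing that $\partial_X^I\partial_x^J g$ is itself $(a-|I|)$-homogeneous, so the same one-variable limit analysis in $t$ applies verbatim to every derivative.
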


\begin{proof}
    Let $a\in\mc$. Let $f\in C^{\infty}(\dnc(M,V))$ be an $a$-homogeneous function on $\dnc(M,V)$. Thanks to \Cref{th homogeneous functions and the trivial action} we have
    \begin{equation*}
        \exists f_{M-}, f_{M+}\in C^{\infty}(M),\ f_{M\times\mr_+^*}=f_{M-}\otimes t_-^{-a} + f_{M+}\otimes t_+^{-a}.
    \end{equation*}
    Denote 
    \begin{equation*}
        g:=f_{M\times\mr^*}\circ\Exp,
    \end{equation*}
    for the sake of simplicity we can assume $U=\caln_V^M$ in the following proof. Then $g$ is a smooth function on $\caln_V^M\times\mr^*_+$ and $a$-homogeneous for the $\alpha$-action.
    Denote:
    \begin{equation*}
        \gamma_+=\begin{bmatrix}
            \caln_V^M & \rightarrow & \mc \\
            (x,X) & \mapsto & g(x,X,1)
        \end{bmatrix}\text{ and }\gamma_-=\begin{bmatrix}
            \caln_V^M & \rightarrow & \mc \\
            (x,X) & \mapsto & g(x,X,-1)
        \end{bmatrix}.
    \end{equation*}
    Then 
    \begin{equation*}
        g=\begin{bmatrix}
            \caln_V^M\times\mr^* & \rightarrow & \mc \\
            (x,X,t) & \mapsto & (-1)^a\gamma_-(x,tX)t_-^{-a}+\gamma_+(x,tX)t_+^{-a}
        \end{bmatrix}.
    \end{equation*}
    Let $\widetilde g$ be a smooth extension of $g$ on $\caln_V^M\times\mr$. Let  us look at the derivatives of $g$ on the $\mr$ direction.
    \begin{equation}\label{eq g partial t}
        \forall t>0,\ \forall l\in\mn,\ (\partial_t^l g)(x,X,t)=\sum_{k=0}^l \begin{pmatrix}
            l\\
            k
        \end{pmatrix} \begin{pmatrix}
            -a\\
            l-k
        \end{pmatrix}(l-k)! t^{-a-l+k} (X^{(k)}\gamma_+)(x,tX),
    \end{equation}
    where $X^{(l-k)}\gamma_+$ denotes the $(l-k)$-th directional derivative of $\gamma_+$ in the direction of $X$ and 
    \begin{equation*}
        \begin{pmatrix}
            -a\\
            l-k
        \end{pmatrix}=\begin{cases}
            \displaystyle\frac{(-a)(-a-1)\hdots(-a-l+k+1)}{(l-k)!} & \text{ if }k<l \\
            1 & \text{ if }k=l 
        \end{cases}
    \end{equation*}
    is the generalized binomial coefficient.
    If $\widetilde g$ extends $g$, then
    \begin{equation*}
        \forall l\in\mn,\ (\partial_t^l g)(x,X,t)\xrightarrow[t\rightarrow 0^+]{}(\partial_t^l \widetilde{g})(x,X,0).
    \end{equation*}
    Using Taylor's formula for $t\mapsto (X^{(k)}\gamma_+)(x,tX)$ we obtain:
    \begin{equation*}
        \forall l\in\mn,\ (\partial_t^l \widetilde{g})(x,X,t)=\sum_{k=0}^l \begin{pmatrix}
            l\\
            k
        \end{pmatrix} \begin{pmatrix}
            -a\\
            l-k
        \end{pmatrix}(l-k)! t^{-a-l+k} \sum_{i=0}^{l+\lceil a\rceil -k}\frac{t^i}{i!}(X^{(k+i)}\gamma_+)(x,0) +o_{t\rightarrow 0^+}(1).
    \end{equation*}
    Now using some usual reindexation tricks and the generalized Vandermonde identity that is:
    \begin{align*}
        \sum_{k=0}^l \begin{pmatrix}
            i\\
            k
        \end{pmatrix}\begin{pmatrix}
            -a\\
            l-k
        \end{pmatrix}=\begin{pmatrix}
            i-a\\
            l
        \end{pmatrix}
    \end{align*}
    one can retrieve
    \begin{align*}
        (\partial_t^l \widetilde{g})(x,X,t) & = \sum_{i=0}^{\max(l,l+\lceil a \rceil)} \frac{l!}{i!} t^{i-a-l} (X^{(i)}\gamma_+)(x,0) \begin{pmatrix}
            i-a\\
            l
        \end{pmatrix} +o_{t\rightarrow 0^+}(1).
    \end{align*}
    Now the above sum exhibits different behaviours depending on the value of $a$. Assume $a\in\rrbracket-\infty,0\rrbracket$. Then 
    \begin{equation*}
        \forall i \in \llbracket 0, l+a\llbracket,\ \begin{pmatrix}
            i-a\\
            l
        \end{pmatrix} = 0.
    \end{equation*}
    So we have 
    \begin{align*}
        (\partial_t^l \widetilde{g})(x,X,t) & = \sum_{i=\max(0,l+a)}^{l} \frac{l!}{i!} t^{i-a-l} (X^{(i)}\gamma_+)(x,0) \begin{pmatrix}
            i-a\\
            l
        \end{pmatrix} +o_{t\rightarrow 0^+}(1)
    \end{align*}
    which yields
    \begin{equation*}
        (\partial_t^l \widetilde{g})(x,X,t) = \begin{cases}
            0 +o_{t\rightarrow 0^+}(1) & \text{if } l<-a \\
            \frac{l!}{(l+a)!} (X^{(l+a)}\gamma_+)(x,0) +o_{t\rightarrow 0^+}(1) & \text{if } l\geq-a
        \end{cases}.
    \end{equation*}
    And for negative values of $t$ we have:
    \begin{equation*}
        (\partial_t^l \widetilde{g})(x,X,t) = \begin{cases}
            0 +o_{t\rightarrow 0^-}(1) & \text{if } l<-a \\
            \frac{l!}{(l+a)!} (X^{(l+a)}\gamma_-)(x,0) +o_{t\rightarrow 0^+-}(1) & \text{if } l\geq-a
        \end{cases}.
    \end{equation*}
    So if $g$ admits such a smooth extension $\widetilde{g}$, necessarily all the derivatives at every order of $\gamma_+$ and $\gamma_-$ coincide on $V$. Conversely such condition is sufficient given that for $I\in\mn^n$ and any $J\in\mn^n$
    \begin{equation*}
        \partial_X^I\partial_x^J g=\begin{bmatrix}
            \caln_V^M\times\mr^* & \rightarrow & \mc \\
            (x,X,t) & \mapsto & (-1)^{a-|I|}(\partial_X^I\partial_x^J\gamma_-)(x,tX)t_-^{-a+|I|}+(\partial_X^I\partial_x^J\gamma_+)(x,tX)t_+^{-a+|I|}
        \end{bmatrix}
    \end{equation*}
    is an $a-|I|$-homogeneous function the previous reasoning applies for the limit of the derivatives with respect to $t$ at zero. This observation concludes the proof in this case.
    Assume now that $a\in\mn^*$
    
    \begin{align*}
        (\partial_t^l \widetilde{g})(x,X,t) = & \sum_{i=0}^{l+a} \frac{l!}{i!} t^{i-a-l} (X^{(i)}\gamma_+)(x,0) \begin{pmatrix}
            i-a\\
            l
        \end{pmatrix} +o_{t\rightarrow 0^+}(1)\\
        = & \sum_{i=0}^{a-1} \frac{l!}{i!} t^{i-a-l} (X^{(i)}\gamma_+)(x,0) \begin{pmatrix}
            i-a\\
            l
        \end{pmatrix}\\
        & + \sum_{i=0}^{l} \frac{l!}{(i+a)!} t^{i-l} (X^{(i+a)}\gamma_+)(x,0) \begin{pmatrix}
            i\\
            l
        \end{pmatrix} +o_{t\rightarrow 0^+}(1).
    \end{align*}
    Now since $\forall k\in \llbracket i+1, +\infty\llbracket,\ \begin{pmatrix}
            i\\
            k
        \end{pmatrix}=0$ we have:
    \begin{align*}
        (\partial_t^l \widetilde{g})(x,X,t) = & \sum_{i=0}^{a-1} \frac{l!}{i!} t^{i-a-l} (X^{(i)}\gamma_+)(x,0) \begin{pmatrix}
            i-a\\
            l
        \end{pmatrix} + \frac{l!}{(l+a)!} (X^{(l+a)}\gamma_+)(x,0)  +o_{t\rightarrow 0^+}(1).
    \end{align*}
    And since 
    \begin{equation*}
        \forall i\in\llbracket 0, a-1\rrbracket,\ \begin{pmatrix}
            i-a\\
            l
        \end{pmatrix}\neq 0,
    \end{equation*}
    for $(\partial_t^l \widetilde{g})(x,X,t)$ to admit a limit when $t\rightarrow 0^+$ we need 
    \begin{equation*}
        \forall i\in\llbracket 0, a-1\rrbracket,\ (X^{(i)}\gamma_+)(x,0)=0.
    \end{equation*}
    Now given that for $I\in\mn^n$ and any $J\in\mn^n$
    $\partial_X^I\partial_x^J g$ is an $a-|I|$-homogeneous function the previous reasoning applies for the limit of the derivatives with respect to $t$ at zero. Then we have that $g$ admits a smooth extension on $\caln_V^M\times\mr$ if and only if all the derivatives at every order of $\gamma_+$ and $\gamma_-$ coincide on $V$ and both functions vanish at order $a-1$ on $V$. Now assume $a\in\mc\setminus\mz$. We still have
    \begin{equation*}
        (\partial_t^l \widetilde{g})(x,X,t) = \sum_{i=0}^{l+\lceil a\rceil} \frac{l!}{i!} t^{i-a-l} (X^{(i)}\gamma_+)(x,0) \begin{pmatrix}
            i-a\\
            l
        \end{pmatrix} +o_{t\rightarrow 0^+}(1).
    \end{equation*}
    Then since 
    \begin{equation*}
        \forall i\in\llbracket 0, l+\lceil a\rceil\rrbracket,\ \begin{pmatrix}
            i-a\\
            l
        \end{pmatrix}\neq 0
    \end{equation*}
    this expression admits a limit when $t\rightarrow 0^+$ if and only if 
    \begin{equation*}
        \forall i\in\llbracket 0, l+\lceil a\rceil\rrbracket,\ (X^{(i)}\gamma_+)(x,0)=0.
    \end{equation*}
    This means that if $g$ admits a smooth extension on $\caln_V^M\times\mr$ then if all the derivatives at every order of $\gamma_+$ and $\gamma_-$ vanish on $V$. Now given any $I\in\mn^n$ and any $J\in\mn^n$
    $\partial_X^I\partial_x^J g$ is an $a-|I|$-homogeneous function, the previous reasoning applies for the limit of the derivatives with respect to $t$ at zero. Thus $g$ admits a smooth extension on $\caln_V^M\times\mr$ if and only $\gamma_+$ and $\gamma_-$ are flat on $V$.
    This concludes the proof.
\end{proof}

\section{On the extension of distributions}\label{section 7}
Recall the setting we had on $\caln_V^M\times\mr$ with the $\alpha$ action:
\begin{equation*}
    \forall s>0,\ \alpha_s=\begin{bmatrix}
        \caln_V^M\times\mr & \to & \caln_V^M\times\mr \\
        (x,X,t) & \mapsto & (x,sX,s^{-1}t)
    \end{bmatrix}.
\end{equation*}
Define the action of dilatation along the normal fibers $\mr_+^*\overset{\beta}{\curvearrowright}\caln_V^M$ by:
\begin{equation*}
    \forall s>0,\ \beta_s=\begin{bmatrix}
        \caln_V^M & \to & \caln_V^M \\
        (x,X) & \mapsto & (x,sX)
    \end{bmatrix}.
\end{equation*}
By abuse of notation denote $\beta$ the induced action on $\caln_V^M\times\mr$ (leaving the real line invariant). Denote as well the action of dilatation along the time coordinate $\mr_+^*\overset{\gamma}{\curvearrowright}\mr$ by:
\begin{equation*}
    \forall s>0,\ \gamma_s=\begin{bmatrix}
        \mr & \to & \mr \\
        t & \mapsto & st
    \end{bmatrix}.
\end{equation*}
By abuse of notation denote $\gamma$ the induced action on $\caln_V^M\times\mr$ (leaving the normal bundle invariant). Then by definition 
\begin{equation*}
    \forall s>0,\ \alpha_s=\beta_s\circ \gamma_{s^{-1}},
\end{equation*}
notice that the $\gamma$ and $\beta$ actions easily commute. When homogeneity will be mentioned in the following without any precision it will mean homogeneity with respect to $\alpha$.

Let $U$ be an open subset of $\caln_V^M\times\mr$ stable by the zoom action, let $T\in C^{\infty}_c(U)$ be a distribution and $a\in\mc$. Then $T$ is $a$-homogeneous iff:
\begin{equation*}
    \forall \varphi\in C^{\infty}_c(U),\ \forall s>0,\ \dual{T}{\varphi}= s^a \dual{T}{\alpha_s^*\varphi}
\end{equation*}
which means
\begin{equation}\label{homogeneity links section 7}
    \forall \varphi\in C^{\infty}_c(U),\ \forall s>0,\ \dual{T}{\gamma_s^*\varphi}= s^a \dual{T}{\beta_s^*\varphi}.
\end{equation}
This identity means that even if the $a$-homogeneity condition does not prescribes itself a behaviour around the submanifolds $V\times\mr$ or $\caln_V^M\times\{0\}$ however it links exactly the behaviour of $T$ around $V\times\mr$ or $\caln_V^M\times\{0\}$. In this sense we can make a use of the results of \Cref{section general results}.

\begin{remark}[Notations]
    Recall the weak-homogeneity spaces of \Cref{section general results}. We will use them on $\caln_V^M\times\mr\to V\times\mr$ and on $\caln_V^M\times\mr\to \caln_V^M$ with respectively a $\beta$ and a $\gamma$ subscript.
\end{remark}

First we can easily adapt local integrability around $V\times\mr$ and $\caln_V^M\times\{0\}$ simultaneously.

\begin{theorem}[Locally integrable homogeneous distributions]\label{th locally integrable homogeneous distributions NxR}\ \\
    Let $b>0$ and let $a\in\mc$ such that $b-\mathfrak{Re}(a)>0$. Let $T\in H^a_\alpha((\caln_V^M\setminus V)\times\mr^*)\cap \overset{\circ}{O_\beta^b}$.
    Then $T$ admits an extension $\dot{T}\in H^a_\alpha(\caln_V^M\times\mr)\cap O_\beta^b\cap O_\gamma^{b-\mathfrak{Re}(a)}$. Such an extension is unique.
\end{theorem}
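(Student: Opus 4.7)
The strategy is to extend $T$ in two successive steps—first across $V\times\mr^*$, then across $\caln_V^M\times\{0\}$—each time invoking \Cref{theorem locally integrable extension theorem W} for an appropriate vector bundle structure, and using the algebraic relation $\alpha_s=\beta_s\circ\gamma_{s^{-1}}$ to trade the strong $\alpha$-homogeneity for the weak homogeneity required by the next step.

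For the first step I view $\caln_V^M\times\mr^*\to V\times\mr^*$ as a vector bundle carrying the $\beta$-action, whose zero section is $V\times\mr^*$ and whose complement is $(\caln_V^M\setminus V)\times\mr^*$, the domain of $T$. Since $b>0$, \Cref{theorem locally integrable extension theorem W} produces a unique extension $T_1\in O_\beta^b$ on $\caln_V^M\times\mr^*$. The distribution $s^{-a}\alpha_s^* T_1$ also belongs to $O_\beta^b$ (because $\alpha$ and $\beta$ commute on $\caln_V^M\times\mr^*$) and restricts to $T$, so the uniqueness clause yields $s^{-a}\alpha_s^* T_1=T_1$ and $T_1$ remains $a$-homogeneous. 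Combining this homogeneity with the identity $\gamma_{s^{-1}}=\alpha_s\circ\beta_{s^{-1}}$ gives
\begin{equation*}
    \langle T_1,\gamma_{s^{-1}}^*\varphi\rangle=\langle\alpha_{s^{-1}}^* T_1,\beta_{s^{-1}}^*\varphi\rangle=s^{-a}\langle T_1,\beta_{s^{-1}}^*\varphi\rangle,
\end{equation*}
and the $\beta$-estimate immediately turns into a $\gamma$-estimate of order $b-\mathfrak{Re}(a)$; thus $T_1\in\overset{\circ}{O_\gamma^{b-\mathfrak{Re}(a)}}$.

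For the second step I switch to viewing $\caln_V^M\times\mr\to\caln_V^M$ as the vector bundle for the $\gamma$-action, whose zero-section complement is $\caln_V^M\times\mr^*$. Since $b-\mathfrak{Re}(a)>0$, \Cref{theorem locally integrable extension theorem W} again provides a unique extension $\dot T\in O_\gamma^{b-\mathfrak{Re}(a)}$ on $\caln_V^M\times\mr$; the same uniqueness trick establishes the $a$-homogeneity of $\dot T$, and running the above identity backwards ($\langle \dot T,\beta_{s^{-1}}^*\varphi\rangle=s^a\langle\dot T,\gamma_{s^{-1}}^*\varphi\rangle$) converts the $\gamma$-estimate into $\dot T\in O_\beta^b$. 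For uniqueness, if $\dot T'$ is another such extension then $D:=\dot T-\dot T'$ is supported on $(V\times\mr)\cup(\caln_V^M\times\{0\})$; its restriction to the open set $(\caln_V^M\setminus V)\times\mr$ lies in $\overset{\circ}{O_\gamma^{b-\mathfrak{Re}(a)}}$ and vanishes on $(\caln_V^M\setminus V)\times\mr^*$, so $\gamma$-uniqueness forces it to be zero there, whence $D\in O_\beta^b$ vanishes on $(\caln_V^M\setminus V)\times\mr$ and $\beta$-uniqueness makes $D=0$. The only genuinely delicate point is the bookkeeping that ensures each intermediate extension retains both its $\alpha$-homogeneity and the weak homogeneity required for the subsequent application of the extension theorem; once the identity $\alpha_s=\beta_s\circ\gamma_{s^{-1}}$ is properly exploited, every such check collapses to a one-line manipulation.
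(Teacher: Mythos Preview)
Your proof is correct and follows the same two-step strategy as the paper: extend first across $V\times\mr^*$ via the $\beta$-extension of \Cref{theorem locally integrable extension theorem W}, then across $\caln_V^M\times\{0\}$ via the $\gamma$-extension, using $\alpha_s=\beta_s\circ\gamma_{s^{-1}}$ to transfer the weak-homogeneity estimates. The only cosmetic difference is that the paper establishes $a$-homogeneity of $\dot T$ by writing it as a limit of $\alpha$-invariant truncations $\chi(n|tX|)\cdot T$, whereas you deduce it from the uniqueness clause of the extension theorem applied to $s^{-a}\alpha_s^*T_1$; your route is cleaner and also supplies the uniqueness argument that the paper states but does not spell out.
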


\begin{proof}
    Let $T\in H^a_\alpha((\caln_V^M\setminus V)\times\mr^*)\cap\overset{\circ}{O_\beta^b}$. The assumption $T\in \overset{\circ}{O_\beta^b}$ means that $T$ is locally integrable in the sense of \Cref{theorem locally integrable extension theorem W} around the base $V\times\mr^*$ of the vector bundle $\caln_V^M\times\mr^*$. Thus $T$ admits a canonical extension $\overset{\circ}{T}\in C^{-\infty}(\caln_V^M\times\mr^*)\cap O_\beta^b$. Moreover, thanks to \Cref{homogeneity links section 7}, this extension satisfies $\overset{\circ}{T}\in \overset{\circ}{O}^{b-\mathfrak{Re}(a)}_\gamma$. Thus $\overset{\circ}{T}$ admits a canonical extension $\dot{T}\in C^{-\infty}(\caln_V^M\times\mr)\cap O_\gamma^{b-\mathfrak{Re}(a)}$. 
    Now take a cutoff function $\chi \in C^{\infty}(\mr_+)$ around $0$. One can easily see that $\dot{T}$ can be written as a simple limit of $a$-homogeneous distributions, namely:
    \begin{equation*}
        \dot{T}=\lim_{n\to\infty} \chi(n|tX|)\cdot T.
    \end{equation*}
    Thus $\dot{T}$ is $a$-homogeneous.
\end{proof}

Now notice that the weak homogeneity extension theorems (\Cref{theorem Meyers extension theorem W} and \Cref{theorem locally integrable extension theorem W}) does not give an explicit formula of the extension, thus giving no tool to check for the $a$-homogeneity of the extension in general. In the previous proof the local integrability assumption enables to circumvent this issue as such an extension only depends on the values of the original distribution on our open set $(\caln_V^M\setminus V)\times\mr^*$. Even if we cannot leverage the weak homogeneity property as simply it will help us towards a general answer to our question:
given an $a$-homogeneous distribution $u\in  C^{-\infty}((\caln_V^M\setminus V)\times \mr)$, is there an $a$-homogeneous distribution $\tilde{u}\in  C^{-\infty}(\caln_V^M\times \mr)$ ? And could this extension be canonical ?

First we need a polar coordinate decomposition along the normal bundle fibers on $(\caln_V^M\setminus V) \times \mr$. Denote by $\cals_V^M$ is the sphere bundle associated to $\caln_V^M$. Consider the diffeomorphism 
\begin{equation*}
    \text{\raisebox{-1 pt}{\FiveStarCenterOpen}} =\begin{bmatrix}
    \mr_+^*\times \cals_V^M \times \mr & \to &  (\caln_V^M\setminus V)\times \mr \\
    (r, \omega, t) & \mapsto & (r\omega, t)
    \end{bmatrix}.
\end{equation*}
Then for $s>0$ we have 
\begin{equation*}
    \text{\raisebox{-1 pt}{\FiveStarCenterOpen}}^{-1}\beta_s \text{\raisebox{-1 pt}{\FiveStarCenterOpen}} =\begin{bmatrix}
    \mr_+^*\times \cals_V^M \times\mr & \to & \mr_+^*\times \cals_V^M \times \mr \\
    (|X|, X/|X|, t) & \mapsto & (s|X|, X/|X|, t)
    \end{bmatrix},
\end{equation*}
which means that $\beta$ reads as the canonical radial dilation in polar coordinates. By abuse of notation we will also denote by $\beta_s$ the previous map. We will denote accordingly by $\gamma$ and $\alpha$ the corresponding actions in this setting. We can extend easily $\beta$, $\gamma$ and $\alpha$ to $\mr_+\times \cals_V^M \times\mr$ by the obvious formula.

Recall that we have a submersion extending \raisebox{-1 pt}{\FiveStarCenterOpen} and collapsing the sphere bundle $\{0\}\times \cals_V^M \times\mr$ to $V \times\mr $ given by:
\begin{equation*}
    \text{\raisebox{-1 pt}{\FiveStar}} =\begin{bmatrix}
    \mr_+\times \cals_V^M \times \mr & \to &  \caln_V^M\times \mr \\
    (r, \omega, t) & \mapsto & (r\omega, t)
    \end{bmatrix}.
\end{equation*}
Now our object of study for the following theorem will be $a$-homogeneous distributions for the $\alpha$ action on $\mr_+^* \times \cals_V^M \times\mr$ and the following theorem will be arguably our most important one. First we need a convenient lemma.
\begin{lemma}[Characterization of homogeneous distributions on $\mr_+^*\times \cals_V^M \times \mr$]\label{lemma characterization of homogeneous distributions on R+* x N x R}\ \\
    Let $u\in C^{-\infty}(\mr_+^*\times \cals_V^M \times \mr)$. Then $u$ is $a$-homogeneous iff 
    \begin{align*}
        \exists w \in  C^{-\infty}(\cals_V^M \times \mr),\ & \forall (f,g) \in C^{\infty}_c(\mr_+^*)\times C^{\infty}_c(\cals_V^M\times\mr), \\
        \dual{u}{f\otimes g} = & \int_{\mr_+^*} s^{a-1} f(s) \dual{w}{\gamma_{s^{-1}}^* g} ds.
    \end{align*}
\end{lemma}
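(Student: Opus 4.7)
My plan is to follow the template of \Cref{th homogeneous functions and the trivial action}, with modifications reflecting that the zoom action $\alpha_s(r,\omega,t)=(sr,\omega,s^{-1}t)$ twists $r$ and $t$ together. Start with the easy direction: if $u$ is defined by the claimed integral formula on tensor products, I use $\alpha_\lambda^*(f\otimes g)(r,\omega,t)=f(\lambda r)g(\omega,\lambda^{-1}t)$, the identity $\gamma_{s^{-1}}^*\gamma_{\lambda^{-1}}^*g=\gamma_{(\lambda s)^{-1}}^*g$, and the substitution $s\to\lambda^{-1}s$ in the parameter integral to check that $\dual{u}{\alpha_\lambda^*(f\otimes g)}=\lambda^{-a}\dual{u}{f\otimes g}$, which is the $a$-homogeneity identity.

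For the nontrivial direction, since the distribution spaces involved are nuclear, $u$ is determined by its values on tensor products $f\otimes g$ with $f\in C^\infty_c(\mr_+^*)$ and $g\in C^\infty_c(\cals_V^M\times\mr)$. I pick $\rho\in C^\infty_c(\mr_+^*,\mr_+)$ normalized so that $\int_0^\infty\rho(s)s^{-1}\,ds=1$, insert this factor inside $\dual{u}{f\otimes g}$, and apply $a$-homogeneity of $u$ under the integral to get
\[
\dual{u}{f\otimes g}=\int_0^\infty s^{a-1}\rho(s)\dual{u_{r,\omega,t}}{f(sr)g(\omega,s^{-1}t)}\,ds.
\]
Continuity of $u$ together with Riemann sum approximation lets me exchange the parameter integral with the distributional pairing freely; combined with the substitution $\sigma=sr$ inside, this yields
\[
\dual{u}{f\otimes g}=\int_0^\infty s^{a-1}f(s)\dual{u_{r,\omega,t}}{r^{-a}\rho(s/r)g(\omega,rt/s)}\,ds.
\]

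The main obstacle is to eliminate the residual $s$-dependence inside the pairing so that the integrand factors through a single distribution $w$, and this is where the twist of the action is crucially exploited. I apply $a$-homogeneity once more, now with dilation parameter $s$ itself, to the test function $(r,\omega,t)\mapsto r^{-a}\rho(s/r)g(\omega,rt/s)$. The pull-back along $\alpha_s$ produces a compensating factor $s^{-a}$, turns $\rho(s/r)$ into $\rho(1/r)$, and leaves $g(\omega,rt/s)$ untouched, so that
\[
\dual{u_{r,\omega,t}}{r^{-a}\rho(s/r)g(\omega,rt/s)}=\dual{u_{r,\omega,t}}{r^{-a}\rho(1/r)g(\omega,rt/s)}.
\]

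Recognising that $g(\omega,rt/s)=(\gamma_{s^{-1}}^*g)(\omega,rt)$, I then define
\[
\dual{w}{h}:=\dual{u_{r,\omega,t}}{r^{-a}\rho(1/r)h(\omega,rt)},\qquad h\in C^\infty_c(\cals_V^M\times\mr).
\]
This is well-defined since $r^{-a}\rho(1/r)h(\omega,rt)$ has compact support in $\mr_+^*\times\cals_V^M\times\mr$: the $r$-support is the compact set $1/\operatorname{supp}(\rho)\subset\mr_+^*$, and on this range the $(\omega,t)$-support of $h(\omega,rt)$ is bounded by compactness of $\operatorname{supp}(h)$. Substituting $h=\gamma_{s^{-1}}^*g$ in the definition of $w$ then delivers the claimed formula.
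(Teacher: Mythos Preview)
Your proof is correct and follows essentially the same approach as the paper: insert a normalizing $\rho\in C^\infty_c(\mr_+^*)$, use homogeneity and a Riemann-sum exchange to move the parameter integral inside $u$, change variables $\sigma=sr$, and apply homogeneity a second time to strip the residual $s$-dependence, arriving at the same $w$ with $\dual{w}{h}=\dual{u_{r,\omega,t}}{r^{-a}\rho(r^{-1})h(\omega,rt)}$. Your additions---the explicit verification of the converse direction and the compact-support check for the test function $r^{-a}\rho(1/r)h(\omega,rt)$---are welcome but do not change the route.
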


\begin{proof}
    Let $u\in C^{-\infty}(\mr_+^*\times \cals_V^M \times \mr)$ be an $a$-homogeneous distribution. Take $(f,g) \in C^{\infty}_c(\mr_+^*)\times C^{\infty}_c(\cals_V^M\times\mr)$.
    Then 
    \begin{equation*}
        \forall s>0,\ \dual{u}{f\otimes g}=s^a \dual{u}{\alpha_{s}^* (f\otimes g)}.
    \end{equation*}
    Now choose $\rho\in C^{\infty}_c(\mr_+^*,\mr_+^*)$ such that $\int_{\mr_+^* } \rho(s)s^{-1}ds=1$. Now by integrating the last equation we obtain:
    \begin{align*}
        \dual{u}{f\otimes g} & =\int_{\mr_+^*} \rho(s)s^{a-1} \dual{u}{\alpha_{s}^* (f\otimes g)}ds\\
        & =\int_{\mr_+^*} \rho(s)s^{a-1} \dual{u_{r,\omega,t}}{f(sr)g(\omega,s^{-1} t)}ds.
    \end{align*}
    Now we can exchange between $u$ and the integral for the same reasons as in the proof of \Cref{th homogeneous functions and the trivial action} and get 
    \begin{align*}
        \dual{u}{f\otimes g} & =\dual{u_{r,\omega,t}}{\int_{\mr_+^*} \rho(s)s^{a-1} f(sr)g(\omega,s^{-1} t)ds}.
    \end{align*}
    A change of variable yields 
    \begin{align*}
        \dual{u}{f\otimes g} & =\dual{u_{r,\omega,t}}{\int_{\mr_+^*} \rho(r^{-1} s)s^{a-1} r^{-a} f(s)g(\omega,s^{-1} rt)ds}\\
        & =\int_{\mr_+^*}s^{-1}f(s) \dual{u_{r,\omega,t}}{\rho(r^{-1} s)s^{a} r^{-a} g(\omega,s^{-1} rt)}ds.
    \end{align*}
    Finally the $a$-homogeneity of $u$ gives 
    \begin{align*}
        \dual{u}{f\otimes g} &=\int_{\mr_+^*}s^{a-1}f(s) \dual{u_{r,\omega,t}}{\rho(r^{-1})r^{-a} g(\omega,s^{-1}rt)}ds
    \end{align*}
    which concludes the proof. The reciprocal statement is easily checked to be true.
\end{proof}

\begin{theorem}[Canonical extensions in polar coordinates and weak homogeneity]\label{th canonical extensions in polar coordinates and weak homogeneity}\ \\
    Take $u\in H^a_\alpha (\mr_+^*\times \cals_V^M \times \mr)$ and assume that $u\in E_\gamma^c$ for some $c\in\mr$. If $c+\mathfrak{Re}(a)\notin -\mn$ then $u$ admits a canonical $a$-homogeneous extension $\widetilde{u}\in H^a_\alpha (\mr_+\times \cals_V^M \times \mr)\cap E_\beta^{c+a}$.
\end{theorem}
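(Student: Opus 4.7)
The plan is to reduce the problem to the weak-homogeneity extension theorem (\Cref{theorem Meyers extension theorem W}) with the $\beta$ action playing the role of the radial dilation, and then to promote the resulting extension to an $a$-homogeneous distribution by invoking the uniqueness part of that same theorem as a rigidity tool.

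First I would convert the global weak-$\gamma$-homogeneity of $u$ into a weak-$\beta$-homogeneity estimate near $\{r=0\}$. Starting from the homogeneity identity \Cref{homogeneity links section 7}, $\dual{u}{\gamma_s^*\varphi} = s^a \dual{u}{\beta_s^*\varphi}$, and substituting $s \mapsto s^{-1}$, one obtains $\dual{u}{\beta_{s^{-1}}^*\varphi} = s^a \dual{u}{\gamma_{s^{-1}}^*\varphi}$. Taking absolute values and using $u \in E_\gamma^c$ yields
\begin{equation*}
|\dual{u}{\beta_{s^{-1}}^*\varphi}| = s^{\mathfrak{Re}(a)} |\dual{u}{\gamma_{s^{-1}}^*\varphi}| \leq s^{c+\mathfrak{Re}(a)} C(\varphi),
\end{equation*}
so $u$ lies in $\overset{\circ}{E}_\beta^{c+\mathfrak{Re}(a)}$, viewed inside the rank-one bundle $\mr\times\cals_V^M\times\mr \to \cals_V^M\times\mr$ after extension by zero across $\{r<0\}$ (an operation that preserves the estimate since test functions can be localized to $r>0$). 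Since by hypothesis $c+\mathfrak{Re}(a) \notin -\mn = \rrbracket-\infty,0\rrbracket$, \Cref{theorem Meyers extension theorem W} produces a unique extension $\bar{u} \in E_\beta^{c+\mathfrak{Re}(a)}$ across $\{r=0\}$; restricting to $\mr_+\times\cals_V^M\times\mr$ yields the candidate $\widetilde{u}$.

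It then remains to verify that $\widetilde{u}$ is $a$-homogeneous for $\alpha$. For fixed $s>0$ I would form $v_s := \gamma_s^*\widetilde{u} - s^a \beta_s^*\widetilde{u}$. Because $\beta$ and $\gamma$ commute, each of the pullbacks $\gamma_s^*$ and $\beta_s^*$ preserves $E_\beta^{c+\mathfrak{Re}(a)}$ up to an $s$-dependent multiplicative constant, hence $v_s$ itself lies in $E_\beta^{c+\mathfrak{Re}(a)}$. On the other hand the $\alpha$-homogeneity of $u$ on $\mr_+^*\times\cals_V^M\times\mr$ forces $v_s$ to vanish on $\{r>0\}$, so $v_s$ and the zero distribution have the same restriction in $\overset{\circ}{E}_\beta^{c+\mathfrak{Re}(a)}$. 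The injectivity statement in \Cref{theorem Meyers extension theorem W} then forces $v_s = 0$, which is precisely the $a$-homogeneity of $\widetilde{u}$. Canonicality of the extension follows from the same uniqueness.

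The main obstacle I anticipate is the bookkeeping in this final rigidity step: carefully checking that $\gamma_s^*\widetilde{u}$ and $s^a\beta_s^*\widetilde{u}$ really both land in $E_\beta^{c+\mathfrak{Re}(a)}$ with admissible (though possibly $s$-dependent) constants, and transferring \Cref{theorem Meyers extension theorem W} from its stated vector-bundle framework to the half-line bundle $\mr_+\times\cals_V^M\times\mr$. The latter is harmless — one can either extend by zero to the full line bundle $\mr\times\cals_V^M\times\mr$ and restrict back, or note that the argument in \cite{meyer1997wavelets} is purely radial and transports verbatim to $\mr_+$ — but it does require a short verification.
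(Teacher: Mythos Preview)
Your approach is correct and genuinely different from the paper's. The paper proceeds constructively: it invokes \Cref{lemma characterization of homogeneous distributions on R+* x N x R} to write $u$ as a one-parameter integral $\int_{\mr_+^*} s^{a-1} f(s)\,\dual{w}{\gamma_{s^{-1}}^*g}\,ds$, and then regularises this integral at $s=0$ by $k=-\lfloor c+\mathfrak{Re}(a)\rfloor$ integrations by parts, integrating from infinity so that $a$-homogeneity is manifestly preserved. Your route instead treats \Cref{theorem Meyers extension theorem W} as a black box for the $\beta$ action and recovers homogeneity afterwards via its uniqueness clause applied to $v_s=\gamma_s^*\widetilde u - s^a\beta_s^*\widetilde u$.

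Your argument is cleaner conceptually and makes the canonicality transparent: $\widetilde u$ is singled out as the unique element of $E_\beta^{c+\mathfrak{Re}(a)}$ restricting to $u$. The paper's explicit formula, on the other hand, pays dividends downstream: the later remarks on preservation of $\pi$-transversality and on the wavefront set of $\widetilde u$ rely on having the extension written concretely as iterated primitives of $s\mapsto s^{a-1}\dual{w}{\gamma_{s^{-1}}^*g}$. Your verification that $\gamma_s^*\widetilde u,\ \beta_s^*\widetilde u\in E_\beta^{c+\mathfrak{Re}(a)}$ is straightforward (commutativity of $\beta,\gamma$ plus replacing the test function by $\gamma_{s^{-1}}^*\varphi$), and the half-line issue is indeed harmless: extending $u$ by zero to $\mr^*\times\cals_V^M\times\mr$ preserves the $E_\beta$ estimate since $\beta_s$ respects the sign of $r$, and the Meyer extension $\bar u$ then automatically has support in $\{r\geq 0\}$ by uniqueness.
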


\begin{proof}
    Let $u\in C^{-\infty}(\mr_+^*\times \cals_V^M \times \mr)$ be an $a$-homogeneous distribution. Thanks to \Cref{lemma characterization of homogeneous distributions on R+* x N x R} we can write:
    \begin{equation}\label{eq spherical a homogeneous}
        \dual{u}{f\otimes g} = \int_{\mr_+^*} s^{a-1} f(s) \dual{w}{\gamma_{s^{-1}}^* g} ds.
    \end{equation}
    A naive way to extend $u$ to $\mr_+\times \cals_V^M \times \mr$ would be for the previous formula to make sense for $f\in C^{\infty}_c(\mr_+)$. Now recall that by assumption 
    \begin{equation*}
        \exists C>0,\ \forall s>0,\ |s^{a-1} \dual{w}{\gamma_{s^{-1}}^* g}|\leq Cs^{c+\mathfrak{Re}(a)}
    \end{equation*}
    Then if $c+\mathfrak{Re}(a)>0$ by assumption the integrand of \Cref{eq spherical a homogeneous} is Riemann-integrable. Then 
    \begin{equation*}
        \Tilde{u}=\begin{bmatrix}
             C^{\infty}_c(\mr_+\times \cals_V^M \times \mr) & \to & \mc \\
            f(r)\otimes g(\omega,t) & \mapsto & \displaystyle\int_{\mr_+^*} s^{a-1} f(s) \dual{w}{\gamma_{s^{-1}}^* g} ds
        \end{bmatrix}
    \end{equation*}
    defines a continuous map and extends $u$ on $\mr_+\times \cals_V^M \times \mr$.
    Now assume $c+\mathfrak{Re}(a)\in ]-\infty,0]\setminus\rrbracket-\infty,0\rrbracket$. Denote $k=-\lfloor c+\mathfrak{Re}(a) \rfloor$. In this setting, if we have no guaranty of the integrability of the integrand as in \Cref{eq spherical a homogeneous}. However the estimate 
    \begin{equation}\label{eq estimate weak hom}
        \exists C>0,\ \forall s>0,\ |s^{a-1} \dual{w}{\gamma_{s^{-1}}^* g}|\leq C s^{c+\mathfrak{Re}(a)}
    \end{equation}
    still holds.
    Now observe that for $f\in C^{\infty}_c(\mr_+^*)$, by integration by parts, \Cref{eq spherical a homogeneous} can be written:
    \begin{equation*}
        \dual{u}{f\otimes g} = -\int_{\mr_+^*} f'(s) \int_{+\infty}^s t^{a-1} \dual{w}{\gamma_{t^{-1}}^* g}dt ds.
    \end{equation*}
    Note that the integral $\int_{+\infty}^s \tau^{a-1} \dual{w}{\gamma_{\tau^{-1}}^* g}d\tau$ makes sense thanks to the estimate \Cref{eq estimate weak hom}. More generally we can integrate (from infinity) up to $k$ times this estimate and obtain:
    \begin{equation}\label{eq integrated estimate weak hom}
        \begin{split}
            & \forall j\in\llbracket 0, k\rrbracket,\ \exists C_j>0,\ \forall s>0,\\
            & \quad \quad \quad \int_{+\infty}^{s} \hdots \int_{+\infty}^{t_3} \int_{+\infty}^{t_2} t_1^{a-1} \dual{w}{\gamma_{t_1^{-1}}^* g}dt_1 dt_2 \hdots dt_j \leq C_j s^{c+\mathfrak{Re}(a)+j}.
        \end{split}
    \end{equation}
    We obtain then that for $f\in C^{\infty}_c(\mr_+^*)$,
    \begin{equation*}
        \dual{u}{f\otimes g} = (-1)^k \int_{\mr_+^*} f^{(k)}(s) \int_{+\infty}^{s} \hdots \int_{+\infty}^{t_3} \int_{+\infty}^{t_2} t_1^{a-1} \dual{w}{\gamma_{t_1^{-1}}^* g}dt_1 dt_2 \hdots dt_k ds.
    \end{equation*}
    But now the above formula still makes sense if $f\in C^{\infty}_c(\mr_+)$ since the integrand is Riemann integrable, thanks to \Cref{eq integrated estimate weak hom} (take $j=k=-\lfloor c+\mathfrak{Re}(a) \rfloor$). This formula easily defines an extension $\dot{u}\in C^{-\infty}(\mr_+\times \cals_V^M \times \mr)$ of $u$. Since every integration is performed from infinity one can easily check that this extension is $a$-homogeneous.
\end{proof}

Now notice that this theorem gives us a canonical extension theorem on $(\caln_V^M\setminus V) \times \mr$.
Indeed we have the polar coordinates isomorphism inducing:
\begin{equation*}
    \text{\raisebox{-1 pt}{\FiveStarCenterOpen}}_* = C^{-\infty}(\mr_+^*\times \cals_V^M \times \mr) \xrightarrow[]{\sim}  C^{-\infty}((\caln_V^M\setminus V)\times \mr) 
\end{equation*}
and the collapsing extension inducing 
\begin{equation*}
    \text{\raisebox{-1 pt}{\FiveStar}}_* = C^{-\infty}(\mr_+\times \cals_V^M \times \mr) \xrightarrow[]{}  C^{-\infty}(\caln_V^M\times \mr).
\end{equation*}
This, given $c\in\mr$ and $a\in\mc$ such that $c+\mathfrak{Re}(a)\notin -\mn$, gives us a map:
\begin{equation*}
    \begin{matrix}
        H^a_\alpha ((\caln_V^M\setminus V) \times \mr)\cap E^c_\gamma & \hookrightarrow & H^a_\alpha (\caln_V^M \times \mr)\cap E^c_\gamma\\
        u & \mapsto & \text{\raisebox{-1 pt}{\FiveStar}}_*\left( \widetilde{\text{\raisebox{-1 pt}{\FiveStarCenterOpen}}^*u}\right)
    \end{matrix}
\end{equation*}
where $\widetilde{\text{\raisebox{-1 pt}{\FiveStarCenterOpen}}^*u}$ is the extension of $\text{\raisebox{-1 pt}{\FiveStarCenterOpen}}^*u$ provided by \Cref{th canonical extensions in polar coordinates and weak homogeneity}.

\begin{remark}[Optimality]
    We cannot loosen the condition on $a$ and $c$: it would not work if $c+\mathfrak{Re}(a)\in\rrbracket-\infty,0\rrbracket$. Assume $\caln_V^M= \mr^n\to \{0\}$, take the function 
    \begin{equation*}
        u=\begin{bmatrix}
            (\mr^n\setminus \{0\}) \times \mr^* & \to & \mc\\
            (X,t) & \mapsto & |t|^{\frac{3}{2}}\frac{1}{|tX|^{n+1}}
        \end{bmatrix}.
    \end{equation*}
    If we identify $u$ with the distribution $udXdt$, and with this we have $u\in F_\gamma^{-n-\frac{1}{2}}$, it admits a canonical extension in $E_\gamma^{-n-\frac{1}{2}}$ that is $\frac{-1}{2}$-homogeneous. We will also denote this extension by $u\in C^{-\infty}((\mr^n\setminus \{0\}) \times \mr)$. Now if $u$ admits an $\frac{-1}{2}$-homogeneous extension $\widetilde{u}\in C^{-\infty}(\mr^n\times \mr)$ such that $\widetilde{u}\in E_\beta^{-n-1}$. This would mean that 
    \begin{equation*}
        \frac{1}{|X|^{n+1}}dX \in C^{-\infty}(\mr^n\setminus \{0\})
    \end{equation*}
    always admits an weakly homogeneous (for the dilation action $\mr_+*\curvearrowright \mr^n$) extension of order $-1$, which is false (see \cite{meyer1997wavelets} Chap.2 section 5).
\end{remark}

Now our main result. 

\begin{theorem}[Extension theorem for tempered distributions]\label{theorem extension tempered distributions section 7}\ \\
    Let $a\in\mc$ and let $u\in H^a_\alpha((\caln_V^M\setminus V)\times \mr)$. Recall the decomposition of \Cref{lemma characterization of homogeneous distributions on R+* x N x R} and assume that 
    $w=u|_{\cals_V^M\times \mr} \in S'(\cals_V^M\times \mr \to \cals_V^M)$ then $u$ admits an $a$-homogeneous extension $\widetilde{u}\in S'(\caln_V^M\times \mr \to V)$.
\end{theorem}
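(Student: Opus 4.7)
The strategy is to push the problem to polar coordinates along the normal fibers, decompose $w$ via its temperedness into weakly $\gamma$-homogeneous pieces, and apply the canonical extension theorem term-by-term.

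First I would pull back $u$ through the polar isomorphism $\text{\raisebox{-1 pt}{\FiveStarCenterOpen}}$, so that the study of $u$ reduces to that of the $a$-homogeneous distribution $\text{\raisebox{-1 pt}{\FiveStarCenterOpen}}^*u$ on $\mr_+^*\times\cals_V^M\times\mr$. By \Cref{lemma characterization of homogeneous distributions on R+* x N x R}, this latter distribution is entirely encoded by $w = u|_{\cals_V^M\times\mr}$, which by hypothesis is tempered on the rank-one bundle $\cals_V^M\times\mr\to\cals_V^M$. Applying the adapted Schwartz structure lemma \Cref{Schwartz tempered distrib lemma vector bundle} to that bundle, and combining it with the $\varepsilon$-decomposition of \Cref{remark epsilon decomposition of Schwartz functions}, I obtain a finite decomposition $w = \sum_j w_j$ with $w_j\in E_\gamma^{c_j}\cap C^{-\infty}_{\operatorname{bc}}(\cals_V^M\times\mr)$. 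Choosing $\varepsilon$ outside a countable set arranges $c_j+\mathfrak{Re}(a)\notin -\mn$ for every $j$, which is exactly the non-resonance condition required by the canonical extension theorem.

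Next, each $w_j$ defines via the formula of \Cref{lemma characterization of homogeneous distributions on R+* x N x R} an $a$-homogeneous distribution $u_j$ on $\mr_+^*\times\cals_V^M\times\mr$, and a direct estimate using the weak $\gamma$-bound on $w_j$ yields $u_j\in E_\gamma^{c_j}$. \Cref{th canonical extensions in polar coordinates and weak homogeneity} then provides a canonical $a$-homogeneous extension $\widetilde{u_j}\in H^a_\alpha(\mr_+\times\cals_V^M\times\mr)\cap E_\beta^{c_j+\mathfrak{Re}(a)}$, and using the collapse map $\text{\raisebox{-1 pt}{\FiveStar}}_*$ introduced in the paragraph preceding this theorem I set $\widetilde u := \text{\raisebox{-1 pt}{\FiveStar}}_*\sum_j \widetilde{u_j}$. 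This is an $a$-homogeneous extension of $u$ on $\caln_V^M\times\mr$, and its $\pi$-compact support over $V$ is inherited from that of $w$.

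The main obstacle is proving that $\widetilde u$ is tempered on the bundle $\caln_V^M\times\mr\to V$, since \Cref{th canonical extensions in polar coordinates and weak homogeneity} only supplies weak homogeneity for the $\beta$-action (which dilates only $X$), while temperedness on this bundle demands polynomial control in the full fiber coordinate $(X,t)$. To close this gap I would exploit the $a$-homogeneity of $\widetilde{u_j}$ one more time: the relation $\alpha_s=\beta_s\gamma_{s^{-1}}$ directly converts the $E_\beta^{c_j+\mathfrak{Re}(a)}$ bound into a bound $\widetilde{u_j}\in E_\gamma^{c_j}$, and the factorization $\beta_{s^{-1}}\gamma_{s^{-1}}=\alpha_{s^{-1}}\gamma_{s^{-2}}$, combined with the $a$-homogeneity and the $E_\gamma^{c_j}$ estimate, yields $\widetilde{u_j}\in E_{\beta\gamma}^{\mathfrak{Re}(a)+2c_j}$, i.e.\ weak homogeneity for the full fiber dilation of $\caln_V^M\times\mr\to V$. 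Applying \Cref{prop weak hom implies tempered} to each $\widetilde{u_j}$ then gives $\widetilde{u_j}\in S'(\caln_V^M\times\mr\to V)$, and the finite sum $\widetilde u$ lies in $S'$ as well, concluding the proof.
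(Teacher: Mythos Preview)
Your proposal is correct and follows essentially the same approach as the paper: decompose $w$ via \Cref{Schwartz tempered distrib lemma vector bundle} and \Cref{remark epsilon decomposition of Schwartz functions} into finitely many weakly $\gamma$-homogeneous pieces with exponents avoiding $-\mn-\mathfrak{Re}(a)$, reconstruct the corresponding $a$-homogeneous $u_j$, extend each via \Cref{th canonical extensions in polar coordinates and weak homogeneity}, and sum. The only cosmetic difference is in the temperedness step: the paper first lands in $S'(\caln_V^M\times\mr\to\caln_V^M)$ via \Cref{remark weak homogeneity and temperness} (using the $E_\gamma$ bound) and then invokes $a$-homogeneity to pass to $S'(\caln_V^M\times\mr\to V)$, whereas you go directly to the full fiber dilation $\beta\gamma$ through the factorization $\beta_{s^{-1}}\gamma_{s^{-1}}=\alpha_{s^{-1}}\gamma_{s^{-2}}$ --- this is exactly the computation hidden behind the paper's ``one easily shows''.
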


\begin{proof}
    Just take $w$ and write it as a finite sum $w=\sum_i w_i$ of weakly-homogeneous distributions thanks to \Cref{Schwartz tempered distrib lemma vector bundle}. Now choose your favorite $\varepsilon\in[0,1[$ such that every weakly-homogeneous summand $w_i$ of the decomposition provided by \Cref{remark epsilon decomposition of Schwartz functions} respects the hypothesis of \Cref{th canonical extensions in polar coordinates and weak homogeneity}. Apply the theorem to each summand $u_i=\int_{s>0} s^{a-1}\gamma^*_s w_i ds$, and the sum of the extensions provides us with and $a$-homogeneous extension $\sum _i \widetilde{u_i}=\widetilde{u} \in C^{-\infty}(\caln_V^M\times \mr)$ of $u$. This extension is $S'(\caln_V^M\times \mr \to \caln_V^M)$ by \Cref{remark weak homogeneity and temperness}. Since $\widetilde{T}$ is $a$-homogeneous one easily show that it implies that $\widetilde{T}\in S'(\caln_V^M\times \mr \to V)$.
\end{proof}

\begin{remark}
    The extension provided by this theorem is not canonical, it relies on the decomposition provided by \Cref{Schwartz tempered distrib lemma vector bundle} which is not canonical.
\end{remark}

Finally we can now answer our original question: given an $a$-homogeneous distribution $u\in C^{-\infty}(\dnc(M,V)\setminus V\times\mr)$ does $u$ have an $a$-homogeneous extension on $\dnc(M,V)$ ? Yes. Is our extension method canonical in some sense? No.

Take $a\in\mc$, take $u\in C^{-\infty}(\dnc(M,V)\setminus (V\times\mr))$. Recall the setting of \Cref{section 4}. One can easily construct a bump function $\chi:  \caln_V^M \to \mr_+$ such that
$\operatorname{supp}(\chi)\subset U$ where $U$ is the domain of the $\operatorname{Exp}$ map. Notice that $\chi$ has proper support for the projection $\caln_V^M \to V$. The smooth function $\operatorname{Exp}_* \chi$ can be extended by $0$ as a smooth function on $M$. Then the function $h$ given by
\begin{equation*}
    h:=\begin{bmatrix}
        \dnc(M,V) & \rightarrow & \mr_+ \\
        (x,t\neq0) & \mapsto & (\operatorname{Exp}_* \chi)(x)\\
        (x,X,0) & \mapsto & 1
    \end{bmatrix}.
\end{equation*}
gives us a smooth bump function around $\caln_V^M\times \{0\} \sqcup V\times \mr^*$ with support in $\Exp(\mathcal{U})$. This function is trivially $0$-homogeneous for the $\alpha$-action. Then $\Exp^* (h\cdot u)$ gives us an $a$-homogeneous distribution on $(\caln_V^M\setminus V)\times\mr$. The extension problem can then be fully answered with our approach. 
Take $v=\Exp^* (h\cdot u)$, thanks to \Cref{lemma characterization of homogeneous distributions on R+* x N x R} write it in polar coordinates as:
\begin{equation*}
    \forall (f,g) \in C^{\infty}_c(\mr_+^*)\times C^{\infty}_c(\cals_V^M\times\mr),\ \dual{v}{f\otimes g} = \int_{\mr_+^*} s^{a-1} f(s) \dual{w}{\gamma_{s^{-1}}^* g} ds
\end{equation*}
with $w \in  C^{-\infty}(\cals_V^M \times \mr)$. Notice that since $\operatorname{supp}(\Exp^* (h\cdot u))\subset \mathcal{U}$ we have that $w$ has proper support for $\cals_V^M \times \mr \to \cals_V^M$. Then chose a partition of unity $(\chi_i)_i$ on $\cals_V^M$ denote $(\boldsymbol{\chi}_i)_i$ the associated partition of unity on the trivial bundle. Then write $w=\sum_i w_i$ where $w_i=\boldsymbol{\chi}_i \cdot w$. Reconstruct the corresponding homogeneous distributions on $(\caln_V^M\setminus V)\times\mr$ by:
\begin{equation*}
    v_i = \text{\raisebox{-1 pt}{\FiveStar}}_*[f\otimes g \mapsto \int_{\mr_+^*} s^{a-1} f(s) \dual{w_i}{\gamma_{s^{-1}}^* g} ds].
\end{equation*}
We have that every $w_i$ is compactly supported, thus a tempered distribution. We can then apply \Cref{theorem extension tempered distributions section 7} to each $v_i$ and get an $a$-homogeneous extension $\sum_i \widetilde{v_i}=\widetilde{\Exp^* (h\cdot u)}\in C^{-\infty}(\caln_V^M \times \mr)$. Then one get an $a$-homogeneous extension of $u$ on $\dnc(M,V)$ with
\begin{equation*}
    \widetilde{u}=\Exp_* \widetilde{\Exp^* (h\cdot u)} + (1-h)\cdot u
\end{equation*}
where $(1-h)\cdot u \in  C^{-\infty}(\dnc(M,V))$ is obtained by trivially extending $(1-h)\cdot u$ by $0$ around $\caln_V^M\times \{0\} \sqcup V\times \mr^*$.

Now notice that this extension is highly non canonical as it depends on the choice of $h$, the choice of the partition of unity $(\chi_i)_i$ on $\cals_V^M$ and the decompositions provided by \Cref{Schwartz tempered distrib lemma vector bundle} for each  $\chi_i$. Two given $a$-homogeneous extensions differ by an $a$-homogeneous extension with support in $V\times\mr$.

\begin{remark}
    This extension theorem can be made canonical in very nice cases.
    \begin{enumerate}
        \item If the exponential map is global i.e. $U=\caln_V^M$ then no need to chose a bump function $h$ and the weak-homogeneity for the $\gamma$ action can be defined intrinsically. Then if $u\in C^{-\infty}(\dnc(M,V)\setminus (V\times\mr))$ happens to be weakly $c$-homogeneous for the $\gamma$ action then one can try to apply \Cref{th canonical extensions in polar coordinates and weak homogeneity}.
        \item If $u\in C^{-\infty}(\dnc(M,V)\setminus (V\times\mr))$ and $\operatorname{supp}(u)\subset\Exp(\mathcal{U})$ the weak-homogeneity for the $\gamma$ action can be defined intrinsically in general. Then if also the projection of $\operatorname{supp}(u)$ on $V$ is compact, one can work to find a relevant "biggest" weak homogeneity degree for $\gamma$ and apply \Cref{th canonical extensions in polar coordinates and weak homogeneity}.
        \item The case of \Cref{th canonical extensions in polar coordinates and weak homogeneity} where $c+\mathfrak{Re}(a)>0$ correspond exactly to \Cref{th locally integrable homogeneous distributions NxR}. This last theorem's assumption are local conditions around $\caln_V^M\times \{0\} \sqcup V\times \mr^*$ and the given extension does not depend on any given bump function around $\caln_V^M\times \{0\} \sqcup V\times \mr^*$.
    \end{enumerate}
\end{remark}

Finally we can characterize the discrepancy in the uniqueness of the extension. For the following proposition we can assume with a partition of unity argument that $V$ is compact. Given $a\in\mc$, denote by $H^a_{\alpha,V\times\mr}(\caln_V^M\times\mr)$ the set of $a$-homogeneous distributions on $\caln_V^M\times\mr$ with support in $V\times\mr$.

\begin{prop}[Characterization of homogeneous distribution supported on $V\times\mr$]\label{characterization of homogeneous distribution supported on VxR}\ \\
    We have a decomposition of the above set by:
    \begin{equation*}
        H^a_{\alpha,V\times\mr}(\caln_V^M\times\mr)=\bigoplus_{b\in\mn}H^a_{\alpha,V\times\mr}(\caln_V^M\times\mr)\cap E_\beta^{-b}.
    \end{equation*}
    Moreover we have an explicit description of the summand given by four cases.
    \begin{enumerate}
        \item If $a+b\notin\mz$, then:
        \begin{align*}
            H^a_{\alpha,V\times\mr}(\caln_V^M\times\mr)\cap E_\beta^{-b} = & \{ P_-\nu_-\otimes t_-^{-a-b-1}  + P_+\nu_+\otimes t_+^{-a-b-1}  \ | \\
            & \quad P_-,P_+\in \operatorname{HomDiffop}^b(\caln_V^M),\ \nu_-,\nu_+\in C^{-\infty}(V)\}.
        \end{align*}
        \item If $a+b\in -\mn^*$, then $H^a_{\alpha,V\times\mr}(\caln_V^M\times\mr)\cap E_\beta^{-b}$ is given by:
        \begin{align*}
            & \{ P_-\nu_-\otimes t_-^{-a-b-1}  + P_0\nu_0\otimes\partial_t^{-a-b}\delta_0 + P_+\nu_+\otimes t_+^{-a-b-1} \ | \\
            & \quad P_-,P_0,P_+\in \operatorname{HomDiffop}^b(\caln_V^M),\ \nu_-,\nu_0,\nu_+\in C^{-\infty}(V)\}.
        \end{align*}
        \item If $a+b=0$, then:
        \begin{align*}
            H^a_{\alpha,V\times\mr}(\caln_V^M\times\mr)\cap E_\beta^{-b}= & \{ -P\nu\otimes t_-^{-1}  + P_0\nu_0\otimes\delta_0 + P\nu\otimes t_+^{-1}  \ | \\
            & \quad P,P_0\in \operatorname{HomDiffop}^b(\caln_V^M),\ \nu,\nu_0\in C^{-\infty}(V)\}.
        \end{align*}
        \item If $a+b\in\mn^*$, then:
        \begin{align*}
            H^a_{\alpha,V\times\mr}(\caln_V^M\times\mr)\cap E_\beta^{-b} =& \{ -P\nu \otimes t_-^{-a-b-1}  + P\nu\otimes t_+^{-a-b-1}  \ | \\
            & \quad P\in \operatorname{HomDiffop}^b(\caln_V^M),\  \nu\in C^{-\infty}(V)\}.
        \end{align*}
    \end{enumerate}
\end{prop}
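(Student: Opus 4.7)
I would first establish the direct-sum decomposition. Working in a local trivialisation $\caln_V^M=V\times\mr^n$ (permitted by locality in $V$), the classical structure theorem for distributions supported on a submanifold writes any $u\in H^a_{\alpha,V\times\mr}(\caln_V^M\times\mr)$ as a locally finite sum $u=\sum_\alpha \nu_\alpha\otimes \partial_X^\alpha\delta_0$ with $\nu_\alpha\in C^{-\infty}(V\times\mr)$. Grouping by $|\alpha|=b$ yields $u=\sum_b u_b$, where each $u_b$ is strongly $(-b)$-$\beta$-homogeneous and hence in $E_\beta^{-b}$. Distributions of distinct strong $\beta$-weights are linearly independent, so this decomposition is direct; moreover the identity $\alpha_s=\beta_s\gamma_{s^{-1}}$, combined with the fact that $\gamma$ preserves $\beta$-weights, shows that the $\alpha$-action respects the grading, so each $u_b$ inherits $a$-$\alpha$-homogeneity from $u$. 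This proves the direct sum at the top of the statement.

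Next I would analyse a fixed summand $u_b=\sum_{|\alpha|=b}\nu_\alpha\otimes\partial_X^\alpha\delta_0$. Since $\partial_X^\alpha\delta_0$ is strongly $(-b)$-$\beta$-homogeneous for $|\alpha|=b$ and $0$-$\gamma$-homogeneous, a short computation using $\alpha_s=\beta_s\gamma_{s^{-1}}$ shows that $u_b\in H^a_\alpha$ is equivalent to every $\nu_\alpha$ being a $(-(a+b))$-$\gamma$-homogeneous distribution on $V\times\mr$. Applying the corollary of \Cref{th homogeneous functions and the trivial action} to $H^{-(a+b)}_\gamma(V\times\mr)$ splits the analysis in two regimes according to whether $-(a+b)\in\rrbracket-\infty,0\rrbracket$, equivalently $a+b\in\mn$ or not, producing explicit bases in terms of $t_+^{-a-b-1}$ and $t_-^{-a-b-1}$ in the generic case and, in the degenerate case, the antisymmetric combination $t_+^{-a-b-1}+(-1)^{-a-b-1}t_-^{-a-b-1}$ together with $\partial_t^{-(a+b)}\delta_0$. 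Refining the dichotomy according to whether $a+b$ is a non-integer or lies in $-\mn^*$, $\{0\}$, or $\mn^*$, and carefully tracking signs, then yields the four cases of the statement.

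It remains to repackage the classification in the form $P\nu$. In a local trivialisation, every $(-b)$-$\beta$-homogeneous distribution on $\caln_V^M$ supported on $V$ is of the shape $\sum_{|\alpha|=b}\mu_\alpha(x)\partial_X^\alpha\delta_0$ with $\mu_\alpha\in C^{-\infty}(V)$, and linear combinations of such expressions are exactly the distributions of the form $P\nu$ obtained by letting $(P,\nu)$ range over $\operatorname{HomDiffop}^b(\caln_V^M)\times C^{-\infty}(V)$. Substituting this identification into the classification of the previous paragraph produces the announced explicit formulas in each of the four cases. The main technical point I expect to be delicate is checking that the $\beta$-weight decomposition $u=\sum_b u_b$ is intrinsic, i.e.\ independent of the local trivialisation and characterised globally via $E_\beta^{-b}$; once this is in place the proof reduces to an unpacking of \Cref{th homogeneous functions and the trivial action} together with the bookkeeping of signs in the antisymmetric combinations and the Dirac pieces.
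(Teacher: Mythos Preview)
Your proposal is correct and follows essentially the same approach as the paper: decompose by normal-derivative order to obtain the $E_\beta^{-b}$ grading, use $\alpha_s=\beta_s\gamma_{s^{-1}}$ to reduce each summand to a $(-(a+b))$-$\gamma$-homogeneity condition on the coefficients $\nu_\alpha$, and then invoke \Cref{th homogeneous functions and the trivial action} (and its corollary) to classify these. Your write-up is in fact more explicit than the paper's, which compresses these steps into a few sentences; the only point you flag as delicate---the intrinsic nature of the $\beta$-weight decomposition---is simply asserted in the paper without further comment.
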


\begin{proof}
    A distribution on $\caln_V^M\times\mr$ supported in $V\times\mr$ is a sum of order $b\in\mn$ normal derivatives of distributions on $V\times\mr$. Thus the direct sum. Now given $b\in\mn$ the study of the corresponding summand boils down to the study of $(-a-b)$-homogeneous distributions for the $\gamma$ action. Such a study can be conducted exhaustively on $V\times\mr^*$ thanks to \Cref{th homogeneous functions and the trivial action}. And then we can compute the obstructions for such distributions to be extended on $V\times\mr$, by adding the suitable derivatives of distributions supported on $V\times\{0\}$, we retrieve all $(-a-b)$-homogeneous distributions for the $\gamma$ action on $V\times\mr$. Thus retrieving all the distributions of $H^a_{\alpha,V\times\mr}(\caln_V^M\times\mr)\cap E_\beta^{-b}$.
\end{proof}

Now a final discussion would be to link these results to the ones of \cite{vanerp2017groupoid}. The missing condition in our work is the notion of transversality of \cite{androulidakis2009}. In our setting we only need to be interested in the transversality with respect to $\pi: \caln_V^M\times\mr \to V\times\mr$. Let us denote by $C^{-\infty}_\pi(\caln_V^M\times\mr)$ the set of $\pi$-transversal distributions on $\caln_V^M\times\mr$. We can modify the previous result by adding this assumption and obtain the following.

\begin{corollary}[Characterization of homogeneous $\pi$-transversal distributions supported on $V\times\mr$]\label{characterization of homogeneous pi-transversal distribution supported on VxR}\ \\
    Let $a\in\mc$, the set $H^a_{\alpha,V\times\mr}(\caln_V^M\times\mr)\cap C^{-\infty}_\pi(\caln_V^M\times\mr)$ admits a decomposition given by:
    \begin{equation*}
        H^a_{\alpha,V\times\mr}(\caln_V^M\times\mr)\cap C^{-\infty}_\pi(\caln_V^M\times\mr) =\bigoplus_{b\in\mn}H^a_{\alpha,V\times\mr}(\caln_V^M\times\mr)\cap C^{-\infty}_\pi(\caln_V^M\times\mr)\cap E_\beta^{-b}.
    \end{equation*}
    Moreover we have an explicit description of the summand. It is non zero iff $a+b\in -\mn^*$. In this case $H^a_{\alpha,V\times\mr}(\caln_V^M\times\mr)\cap C^{-\infty}_\pi(\caln_V^M\times\mr)\cap E_\beta^{-b}$ is given by:
    \begin{align*}
        & \{ (-1)^{-a-b-1} P\nu\otimes t_-^{-a-b-1}  + P\nu\otimes t_+^{-a-b-1} |\  P\in \operatorname{HomDiffop}^b(\caln_V^M),\ \nu\in C^{\infty}(V,|\Lambda|^1_V)\}.
    \end{align*}
    
\end{corollary}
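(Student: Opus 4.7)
The plan is to derive this corollary directly from \Cref{characterization of homogeneous distribution supported on VxR} by intersecting each summand of its direct-sum decomposition with the subspace of $\pi$-transversal distributions.

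First I would check that the decomposition by normal order is compatible with $\pi$-transversality. A distribution on $\caln_V^M\times\mr$ supported on $V\times\mr$ admits a unique finite expansion as a sum of pieces of normal-derivative order $b\in\mn$, and the projection onto the order-$b$ piece is linear. Since $\pi$-transversality is a smoothness condition in the $V\times\mr$ directions, it commutes with differentiation along the normal fibers, so the order-$b$ summand of a $\pi$-transversal distribution is itself $\pi$-transversal. This yields the asserted direct sum.

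Second, I would fix $b\in\mn$ and analyze each of the four cases of \Cref{characterization of homogeneous distribution supported on VxR}. In every case the general element is of the form $(\text{normal differential operator})\cdot(\text{density on }V)\otimes(\text{distribution in }t)$, and $\pi$-transversality imposes two conditions: the density on $V$ must be smooth, hence in $C^\infty(V,|\Lambda|^1_V)$, and each factor in $t$ must in fact be a smooth function on $\mr$. The first condition is immediate from the definition; the second is the deciding check. In Case~1 ($a+b\notin\mz$) the exponent $-a-b-1$ is non-integer, so $t_\pm^{-a-b-1}$ carry fractional singularities at the origin and both coefficients are forced to vanish. In Case~3 ($a+b=0$) the Hörmander distributions $t_\pm^{-1}$ require logarithmic renormalization and $\delta_0$ is singular, so everything vanishes. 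In Case~4 ($a+b\in\mn^*$) the exponent is a strictly negative integer and $t_\pm^{-a-b-1}$ are genuine distributions, forcing $P\nu=0$.

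Only Case~2 ($a+b\in-\mn^*$, with $m:=-a-b-1\in\mn$) survives. There the functions $t_+^m$ and $t_-^m$ are locally integrable and coincide with $t^m$ on $\mr_+$ and with $(-t)^m=(-1)^m t^m$ on $\mr_-$ respectively. Matching one-sided derivatives of every order at $0$ shows that $P_+\nu_+\otimes t_+^m + P_-\nu_-\otimes t_-^m$ is smooth in $t$ if and only if $P_-\nu_-=(-1)^m P_+\nu_+$, in which case the sum reduces to the smooth function $t^m$ times the common $V$-factor. The pure $\delta$-type term $P_0\nu_0\otimes\partial_t^{-a-b}\delta_0$ is supported at $t=0$ and is never smooth, so its coefficient must vanish. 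Setting $P:=P_+$ and $\nu:=\nu_+$ then produces the stated formula, and the main technical point is the Case~2 jet-matching computation at $t=0$; the rest is bookkeeping within the cases of \Cref{characterization of homogeneous distribution supported on VxR} and the routine identification of smoothness of $\nu$ with membership in $C^\infty(V,|\Lambda|^1_V)$.
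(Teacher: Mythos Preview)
Your proposal is correct and follows exactly the approach the paper intends: the corollary is stated immediately after \Cref{characterization of homogeneous distribution supported on VxR} with only the sentence ``We can modify the previous result by adding this assumption and obtain the following,'' so the paper leaves precisely the case-by-case check you carry out as an exercise. Your identification of $\pi$-transversality with smoothness in the $V\times\mr$ directions, the observation that the order-$b$ projection preserves it, and the jet-matching in Case~2 are all the right ingredients.
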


\begin{remark}
    If we apply this result to $\dnc(M\times M,\Delta M)=\mathbb{T}M$ as in \cite{vanerp2017groupoid}, since we can work on the exponential charts for $\pi$ to coincide with the range or source maps of $\mathbb{T}M$, the previous set is exactly $\mathbb{\Psi}^a\cap  \cale_r'(\mathbb{T}M)^{(0)}$. It is the set of differential operators of \cite{vanerp2017groupoid}, section 10.
\end{remark}

That being said observe the following regarding \Cref{theorem extension tempered distributions section 7}:
\begin{enumerate}
    \item The $\text{\raisebox{-1 pt}{\FiveStarCenterOpen}}^*$ and the $\text{\raisebox{-1 pt}{\FiveStar}}_*$ maps preserve $\pi$-transversality (denote also by $\pi: \mr_+\times\cals_V^M\times\mr \to V\times\mr$ the projection on the base in polar coordinates).
    \item Given a distribution $u$ such that \Cref{th canonical extensions in polar coordinates and weak homogeneity} holds, thanks to the explicit formulas provided, both $u$ and $\widetilde{u}$ are $\pi$-transversal if and only if $w$ is transversal for the projection $\cals_V^M\times\mr \to V\times\mr$.
\end{enumerate}
We have then that our extension result of \Cref{th canonical extensions in polar coordinates and weak homogeneity} on $\caln_V^M\times\mr$ preserves $\pi$-transversality. Moreover, thanks to the formula, if we suppose $u$ to be smooth then its extension $\widetilde{u}$ provided by the theorem has its wavefront set contained in ${\caln_V^M}^*\times\mr$ the conormal bundle over $V\times\mr$. We can then describe the discrepancy in the uniqueness of the extension in the $\pi$-transversal case.

\begin{remark}[Weak homogeneity and kernels of classical pseudodifferential operators]
    Given a classical pseudodifferential operator in the sense of \cite{vanerp2017groupoid} $\mathbb{P}\in\mathbb{\Psi}$, given $b\in\mr$ such that $\mathbb{P}\in O^{-b}_\beta$. The classical operator with kernel $\mathbb{P}|_{t=1}$ is of order $b$. This means that when $W\to V=TM\times\mr \to M\times\mr$ the obstruction to the extension result of \Cref{th canonical extensions in polar coordinates and weak homogeneity} is related to the polyhomogeneous extension of $\mathbb{P}|_{t=1}$ near the diagonal $\Delta M \subset M\times M$, this was actually the initial motivation of the paper.
\end{remark}

\begin{remark}[Extension and pseudodifferential operators]\ \\
    Take a smooth, $a$-homogeneous density $f\in C^{\infty}(\caln^M_V\times\mr \setminus V\times\mr, |\Lambda|^1\operatorname{ker}d\pi)$, thanks to what we saw, we can find an $a$-homogeneous extension $\widetilde{f}\in C^{-\infty}_\pi(\caln^M_V\times\mr)$.
    \begin{itemize}
        \item If $a\notin-\mn$ such a $\pi$-transversal extension is unique thanks to \Cref{characterization of homogeneous pi-transversal distribution supported on VxR}.
        \item Given any bump function $\chi\in C^{\infty}_{\operatorname{fc}}(\caln^M_V\times\mr)$ around $V\times\mr$ with $\pi$-proper support, we obtain $\chi \widetilde{f}\in\mathbb{\Psi}^a$ when $\dnc(M,V)=\mathbb{T}M$.
    \end{itemize}
\end{remark}

The idea of treating pseudodifferential operators by only looking at exact homogeneity in fact already appears in the use of Proposition 43 in \cite{vanerp2017groupoid}. The proposition tells us that there is a one to one correspondence between $\faktor{\mathbb{\Psi}^a(M)}{\mathbb{\Psi}^{-\infty}(M)}$ and $C^\infty (T^*M \times\mr\setminus M\times\{0,0\})\cap H^a(T^*M \times\mr\setminus M\times\{0,0\})$, where homogeneity is with respect to the dilation action on the vector bundle $T^*M \times\mr\to M$. An explicit and simple construction of the correspondence is provided in \cite{cren2023filtered}. However because of singularities of such homogeneous $A\in C^\infty (T^*M \times\mr\setminus M\times\{0,0\})$ around $M\times\{0,0\}$, given $P\in \Psi^a(M)$, we cannot find in general $\mathbb{P}\in H^a_{\alpha}(\mathbb{T}M)\cap C^{-\infty}_r(\mathbb{T}M)$ such that $\mathbb{P}|_{t=1}=P$. This would mean that the Wodzicki residue of $P$ and the Kontsevitch-Vishik trace of $P$ (when defined) always vanish thanks to \cite{mohsen2024wodzicki} which is absurd.

\printbibliography

@article{HilsumSkandalis1987,
     author = {Hilsum, Michel and Skandalis, Georges},
     title = {Morphismes $K$-orient\'es d{\textquoteright}espaces de feuilles et fonctorialit\'e en th\'eorie de {Kasparov} (d{\textquoteright}apr\`es une conjecture {d{\textquoteright}A.} {Connes)}},
     journal = {Annales scientifiques de l'\'Ecole Normale Sup\'erieure},
     pages = {325--390},
     publisher = {Elsevier},
     volume = {4e s{\'e}rie, 20},
     number = {3},
     year = {1987},
     doi = {10.24033/asens.1537},
     mrnumber = {90a:58169},
     zbl = {0656.57015},
     language = {fr},
     url = {http://www.numdam.org/articles/10.24033/asens.1537/}
}

@misc{androulidakis2009,
      title={Pseudodifferential calculus on a singular foliation}, 
      author={Iakovos Androulidakis and Georges Skandalis},
      year={2009},
      eprint={0909.1342},
      archivePrefix={arXiv},
      primaryClass={math.DG},
      url={https://arxiv.org/abs/0909.1342}, 
}

@book{schwartz1966théorie,
  title={Th{\'e}orie des distributions},
  author={Schwartz, L.},
  lccn={67098885},
  series={Publ. de l'Inst. de Math. de l'Univ. de Strasbourg},
  url={https://books.google.fr/books?id=ziDvAAAAMAAJ},
  year={1966},
  publisher={Hermann}
}

@book{meyer1997wavelets,
  title={Wavelets, Vibrations, and Scalings},
  author={Meyer, Y. and American Mathematical Society},
  isbn={9781470438555},
  series={CRM monograph series},
  url={https://books.google.fr/books?id=3-citAEACAAJ},
  year={1997},
  publisher={American Mathematical Society}
}

@book{kashiwara2002sheaves,
  title={Sheaves on Manifolds: With a Short History. {\guillemotleft}Les d{\'e}buts de la th{\'e}orie des faisceaux{\guillemotright}. By Christian Houzel},
  author={Kashiwara, M. and Schapira, P.},
  isbn={9783540518617},
  lccn={90042430},
  series={Grundlehren der mathematischen Wissenschaften},
  url={https://books.google.fr/books?id=qfWcUSQRsX4C},
  year={2002},
  publisher={Springer Berlin Heidelberg}
}

@misc{debord2013adiabatic,
      title={Adiabatic groupoid, crossed product by $\mathbb{R}_+^*$ and Pseudodifferential calculus}, 
      author={Claire Debord and Georges Skandalis},
      year={2013},
      eprint={1307.6320},
      archivePrefix={arXiv},
      primaryClass={math.FA}
}

@Misc{FultonMacPherson,
 Author = {Fulton, William and MacPherson, Robert},
 Title = {Intersecting cycles on an algebraic variety},
 Year = {1977},
 Language = {English},
 HowPublished = {Real and compl. {Singul}., {Proc}. {Nordic} {Summer} {Sch}., {Symp}. {Math}., {Oslo} 1976, 179-197 (1977).},
 Keywords = {14C15,57R20},
 zbMATH = {3598566},
 Zbl = {0385.14002}
}

@article{article,
author = {Connes, Alain and Skandalis, G.},
year = {1984},
month = {01},
pages = {},
title = {The longitudinal index theorem for foliations},
volume = {20},
journal = {Publications of the Research Institute for Mathematical Sciences},
doi = {10.2977/prims/1195180375}
}

@misc{vanerp2017groupoid,
      title={A groupoid approach to pseudodifferential operators}, 
      author={Erik Van Erp and Robert Yuncken},
      year={2017},
      eprint={1511.01041},
      archivePrefix={arXiv},
      primaryClass={math.DG}
}

@misc{lescure2015convolution,
      title={About the convolution of distributions on groupoids}, 
      author={Jean-Marie Lescure and Dominique Manchon and Stéphane Vassout},
      year={2015},
      eprint={1502.02002},
      archivePrefix={arXiv},
      primaryClass={math.OA}
}

@article{van_Erp_2017,
	doi = {10.1112/blms.12096},
  
	url = {https://doi.org/10.1112%2Fblms.12096},
  
	year = 2017,
	month = 10,
  
	publisher = {Wiley},
  
	volume = {49},
  
	number = {6},
  
	pages = {1000--1012},
  
	author = {Erik van Erp and Robert Yuncken},
  
	title = {On the tangent groupoid of a filtered manifold},
  
	journal = {Bulletin of the London Mathematical Society}
}

@book{connes1994noncommutative,
  title={Noncommutative Geometry},
  author={Connes, A.},
  isbn={9780121858605},
  lccn={lc94026550},
  url={https://books.google.de/books?id=SDot3ObNQwIC},
  year={1994},
  publisher={Elsevier Science}
}

@book{hörmanderAnalysis1,
  title={The Analysis of Linear Partial Differential Operators I: Distribution Theory and Fourier Analysis},
  author={H{\"o}rmander, L.},
  isbn={9783642614972},
  series={Classics in Mathematics},
  url={https://books.google.fr/books?id=aaLrCAAAQBAJ},
  year={2015},
  publisher={Springer Berlin Heidelberg}
}

@misc{cren2023filtered,
      title={Filtered Calculus and crossed products by R-actions}, 
      author={Clément Cren},
      year={2023},
      eprint={2308.13532},
      archivePrefix={arXiv},
      primaryClass={math.OA},
      url={https://arxiv.org/abs/2308.13532}, 
}

@misc{mohsen2024wodzicki,
      title={A groupoid approach to the Wodzicki residue and the Kontsevich-Vishik trace}, 
      author={Omar Mohsen},
      year={2024},
      eprint={2312.10415},
      archivePrefix={arXiv},
      primaryClass={math.AP},
      url={https://arxiv.org/abs/2312.10415}, 
}

@article{Mohsen2020,
   title={On the deformation groupoid of the inhomogeneous pseudo‐differential Calculus},
   volume={53},
   ISSN={1469-2120},
   url={http://dx.doi.org/10.1112/blms.12443},
   DOI={10.1112/blms.12443},
   number={2},
   journal={Bulletin of the London Mathematical Society},
   publisher={Wiley},
   author={Mohsen, Omar},
   year={2020},
   month=nov, pages={575–592} }

\end{document}